\title{BMR freeness for icosahedral family}
\author{Shunsuke Tsuchioka}
\address{Graduate School of Mathematical Sciences, University of Tokyo,
Komaba, Meguro, Tokyo, 153-8914, Japan}
\thanks{The first author was supported in part by JSPS Kakenhi Grants 26800005.}
\email{tshun@kurims.kyoto-u.ac.jp}
\date{Oct 11, 2017}
\keywords{complex reflection groups,
Coxeter grops, Weyl groups,
Iwahori-Hecke algebras,
Ariki-Koike algebras,
cyclotomic Hecke algebras,
BMR freeness conjecture,
rational Cherednik algebras,
symplectic reflection algebras,
hyperplane arrangements,
monodromy,
noncommutative Gr\"obener basis,
Bergman's diamond lemma,
Newman's lemma,
Knuth-Bendix completion,
Church-Rosser property,
rewriting systems,
critical pairs,
Shephard-Todd classification}
\subjclass[2010]{Primary~20C08, Secondary~68W30}
\newtheorem{Thm}{Theorem}[section]
\newtheorem{Def}[Thm]{Definition}
\newtheorem{Prop}[Thm]{Proposition}
\newtheorem{Rem}[Thm]{Remark}
\newtheorem{Cor}[Thm]{Corollary}
\newtheorem{Ex}[Thm]{Example}
\tikzstyle{every picture}+=[remember picture]
\tikzstyle{na} = [baseline=-.5ex]
\tikzstyle{mine}= [arrows={angle 90}-{angle 90},thick]
\def\Llleftarrow{%
\lower2pt\hbox{\begingroup
\tikz
\draw[shorten >=0pt,shorten <=0pt] (0,3pt) -- ++(-1em,0) (0,1pt) -- ++(-1em-1pt,0) (0,-1pt) -- ++(-1em-1pt,0) (0,-3pt) -- ++(-1em,0) (-1em+1pt,5pt) to[out=-105,in=45] (-1em-2pt,0) to[out=-45,in=105] (-1em+1pt,-5pt);
\endgroup}
}
\DeclareMathOperator*{\restprod}%
{\mathchoice{\ooalign{\ensuremath{\displaystyle\prod}\crcr\ensuremath{\displaystyle\coprod}}}%
  {\ooalign{\ensuremath{\textstyle\prod}\crcr\ensuremath{\textstyle\coprod}}}%
  {\ooalign{\ensuremath{\scriptstyle\prod}\crcr\ensuremath{\scriptstyle\coprod}}}%
  {\ooalign{\ensuremath{\scriptscriptstyle\prod}\crcr\ensuremath{\scriptscriptstyle\coprod}}}%
}
\renewcommand{\boxed}[2][\fboxsep]{{%
  \setlength{\fboxsep}{#1}\fbox{\m@th$\displaystyle#2$}}}
\newcommand{\MA}{\mathbb{A}}
\newcommand{\EXS}[1]{Y_j}
\DeclareMathOperator{\IRR}{Irr}
\DeclareMathOperator{\RED}{\mathsf{norm}}
\DeclareMathOperator{\PAIR}{\mathsf{pair}}
\DeclareMathOperator{\AVO}{\mathsf{Irr}}
\DeclareMathOperator{\AVOID}{\mathsf{avoid}}
\DeclareMathOperator{\LT}{\mathsf{LT}}
\DeclareMathOperator{\LEX}{\mathsf{lex}}
\DeclareMathOperator{\RLEX}{\mathsf{rlex}}
\newcommand{\mygeq}{\succeq}
\newcommand{\myge}{\succ}
\newcommand{\NOTDIV}{\nmid}
\newcommand{\cor}{k}
\newcommand{\MON}[1]{{#1}^{\ast}}
\newcommand{\FP}[1]{\cor[#1^{\ast}]}
\newcommand{\FPZ}[1]{\mathbb{Z}[#1^{\ast}]}
\begin{document}
\maketitle

\begin{abstract}
We verify the Brou\'e-Malle-Rouquier (BMR) freeness for cyclotomic Hecke algebras associated with complex reflection groups $G_{17}$, $G_{18}$, $G_{19}$
in the Shephard-Todd classification. Together with results of Ariki, Ariki-Koike, Brou\'e-Malle, Marin, Marin-Pfeiffer and Chavli, this settled affirmatively the BMR freeness 
conjecture. 
Our verification is inspired by Bergman's diamond lemma and requires 1GB memory and 5 days calculation on a PC.
\end{abstract}

\section{Introduction}
The Coxeter group $W$ is a major player in Lie theory.
The Coxeter presentation of $W$ gives that of the Iwahori-Hecke algebra $H$
and that of the braid group $B$.
$H$ is a quotient of the group algebra of $B$ and free of rank $|W|$.

In their seminal paper~\cite{BMR}, Brou\'e-Malle-Rouquier (BMR, for short)
generalized the constructions to complex reflection groups (CRG, for short).
In this paper, we call their Hecke algebras cyclotomic Hecke algebras 
according to a tradition in modular representation theory of the symmetric groups.
Because the Coxeter groups and real reflection groups are the same,
it is natural to expect that they share similar properties to those for Coxeter groups.
Among the list of expectations, the so-called BMR freeness conjecture
states that the cyclotomic Hecke algebra associated with a CRG $W$ is free of rank $|W|$ over a base ring.

According to Shephard-Todd, CRG are classified into an infinite series $G(de,e,r)$ and 
exceptional groups indexed as $G_{4},\cdots,G_{37}$ (among them $G_{23},G_{28},G_{30},G_{35},G_{36},G_{37}$ are 
the Coxeter groups of type $H_3,F_4,H_4,E_6,E_7,E_8$ respectively).
As of the beginning of 2017, BMR freeness conjecture is known to be true other than $G_{17},G_{18},G_{19},G_{20},G_{21}$
and Marin gave a proof for $G_{20},G_{21}$ in February~\cite[Theorem 2.2]{Ma2}.
For $G_{21}$, his proof is a computer proof.
The goal of this paper is to give a computer proof for the remaining CRG $G_{17},G_{18},G_{19}$.
Though we employ computer, our method is in a sense uniform
and applicable to rank 2 CRG $G_{4},\cdots,G_{22}$.


\begin{Thm}\label{maintheorem}
For $4\leq n\leq 22$, let $H_n$ be an associative algebra (with 1) over $\mathbb{Z}[a_1,\cdots,a_{\ell}]$ as in Table \ref{tab1},\ref{tab2},\ref{tab3}.
As a $\mathbb{Z}[a_1,\cdots,a_{\ell}]$-module, $H_n$ is free of rank $|G_n|$.
\end{Thm}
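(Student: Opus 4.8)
The plan is to realize each $H_n$ as a quotient of a free associative algebra and to read off a basis using Bergman's diamond lemma. Let $X$ be the finite generating set appearing in Tables \ref{tab1}, \ref{tab2}, \ref{tab3}, let $\MON{X}$ be the free monoid on $X$, and write $H_n=\mathbb{Z}[a_1,\cdots,a_{\ell}]\langle X\rangle/J$, where $J$ is the two-sided ideal generated by the listed defining relations: the braid relations of the associated braid group together with the polynomial (cyclotomic) relations on the distinguished generators. First I would fix a monomial well-order $\preceq$ on $\MON{X}$ compatible with left and right multiplication, and --- using that the leading coefficients are $\pm1$ --- normalize each relation $f=0$ to be monic in its $\preceq$-leading monomial and orient it into the rewriting rule $\LT(f)\to\LT(f)-f$. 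Because $\preceq$ is a well-order the rewriting system terminates, so every element of $H_n$ reduces to a $\mathbb{Z}[a_1,\cdots,a_{\ell}]$-linear combination of the irreducible (normal-form) words; in particular the set $N$ of irreducible words spans $H_n$, so $\RANK_{\mathbb{Z}[a_1,\cdots,a_{\ell}]}H_n\le|N|$.

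The substantive step is to make this rewriting system confluent --- equivalently, to complete the oriented relations to a noncommutative Gr\"obner basis of $J$. I would enumerate all overlap and inclusion ambiguities (the critical pairs) among pairs of rules, reduce each along both branches, and, whenever the branches disagree, adjoin the $\preceq$-oriented difference as a new rule and inter-reduce the rule set; this is Knuth--Bendix completion. By Newman's lemma a terminating system is confluent as soon as every critical pair resolves, and Bergman's diamond lemma then upgrades confluence to the assertion that the set $N$ of irreducible words of the \emph{completed} system is a \emph{free} $\mathbb{Z}[a_1,\cdots,a_{\ell}]$-basis of $H_n$. The theorem thus reduces to the finite count $|N|=|G_n|$: the irreducible words of a finite complete system are precisely those containing none of the (finitely many) left-hand sides as a factor, so $|N|$ can be extracted from the completed rules and matched against the orders $|G_n|$ recorded in the Shephard--Todd classification. (That $|N|\ge|G_n|$ is in any event forced by specializing the $a_i$ to suitable roots of unity, whereupon $H_n$ degenerates onto the group algebra of $G_n$; but the diamond lemma already yields equality once confluence is secured.)

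The principal obstacle is the completion step, and it is practical rather than conceptual. Knuth--Bendix completion need not terminate on arbitrary input, and even when it does the resulting complete system --- and, more to the point, the list of critical pairs whose joinability must be certified --- can be very large; for the three outstanding groups $G_{17},G_{18},G_{19}$, which are the largest of the rank $2$ exceptional complex reflection groups, this is exactly what puts the verification beyond hand computation and accounts for the $1$GB of memory and five days quoted in the abstract. The real human input is the upstream choice of presentation, alphabet $X$, and monomial order $\preceq$ that keep the completion finite and the completed system tractable; once these are found and the machine has certified local confluence and returned $|N|=|G_n|$ for every $4\le n\le22$, Theorem \ref{maintheorem} follows uniformly over that range, the cases $n\in\{17,18,19\}$ being the new content.
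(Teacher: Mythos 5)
Your plan is the direct completion route: orient the defining relations over $\MA=\mathbb{Z}[a_1,\cdots,a_{\ell}]$, run Knuth--Bendix until all critical pairs resolve, invoke Bergman's diamond lemma to get a free basis of normal forms, and count. The genuine gap is that the success of this completion is assumed rather than argued, and it is precisely the step that is not available. Two things are presumed. First, termination with a \emph{confluent} system: the paper never obtains one for $H_n$ --- the author states explicitly that he could not even prove strong normalization of the final system $R_n$ of Definition \ref{finalRS}, the resolvability of all ambiguities of $R_n$ is never verified, and the monomial order used is not compatible with $R_n$, so neither Newman's lemma nor Theorem \ref{berthm} applies to it. Second, monic orientability of the generated rules: over the non-field ring $\MA$, a relation coming from a critical pair can only be turned into a reduction $(W,f)$ if its leading coefficient is a unit. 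This is true of the initial relations but is not automatic afterwards; the paper must check it at every step (condition (Q): the reduced difference has a \emph{unique} monomial outside $\IRR(T_n)$, with coefficient $\pm1$) along a hand-crafted sequence of ambiguities (\S\ref{dat}), and the ``leading term'' there is not even taken with respect to the order. A single non-unit leading coefficient would stall your completion; this obstruction is conceptual, not merely a matter of memory and CPU time.

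The paper's actual argument sidesteps confluence entirely. Completion is performed only for the group algebra $\mathbb{Z}G_n$ (coefficients $\pm1$, Proposition \ref{existsRS}), and serves solely to produce the avoiding pattern $T_n$ with $|\AVO(T_n)|=|G_n|$. For the Hecke algebra itself the machine certifies only the spanning statement of Propositions \ref{crucialprop}/\ref{actprop}: for each of the finitely many products $wx$ with $w\in\AVO(T_n)$, $x\in X_n$, the (possibly non-confluent, case-by-case terminating) reduction lands in the $\MA$-span of $\AVO(T_n)$. Freeness then comes from Marin's criterion \cite[Proposition 2.3(ii)]{Ma2}: a cyclotomic Hecke algebra spanned by $|G_n|$ elements is automatically free of rank $|G_n|$. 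Your route, if the completion could be carried out, would be self-contained and would not need that external input --- that is what it would buy --- but as written it rests on a confluence certificate that the paper deliberately avoids needing and that no one has produced; the weaker spanning certificate plus \cite{Ma2} is what makes the $G_{17},G_{18},G_{19}$ computation feasible.
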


Here the definitions of $H_n$ (see ~\cite[Definition 2.1]{Ma2}) are different from the 
original~\cite[Definition 4.21]{BMR}
but equivalent~\cite[Proposition 2.3.(ii)]{Ma2}.


Our method, inspired by Bergman's diamond lemma~\cite{Ber}
as noncommutative Gr\"obner basis theory
and some of Marin's result for $G_{21}$, is just computational.
On the other hand there are conceptual approaches (see ~\cite{Los,ER,Eti}) that operates well to establish various weak analog of
BMR freeness (for some CRG) that give us a hope that full BMR freeness can be proved at least withough computer.

Unfortunately but clearly, the proof given in this paper never give us such an understanding. 
Nonetheless, a reason for writing this paper is that our method seems to be applicable 
to other concretely defined ``small'' algebras and might be worth recording the detail.
Other reason is that 
it seems that the reduction system $R_n$ in Definition \ref{finalRS} 
is strongly normalizing (i.e., any choice of nontrivial applications leads to a unique normal form) though I could not prove that.
If it is the case, $R_n$ is expected to play a role in the study of cyclotomic Hecke algebras of rank 2
since it is no wonder some statements for exceptional case can only be proved through computer typically in Lie theory.

\hspace{0mm}

\noindent{\bf Acknowledgments.} The major part of the work was carried out
at Caffe Strada when the author visited University of California Berkeley 
for Berkeley-Tokyo Summer School ``Geometry, Representation Theory and Mathematical Physics'' in August, 2017.
The author thanks Toshitake Kohno for giving him an opportunity.

\hspace{0mm}

\noindent{\bf Notations and Conventions.} In the rest, $X$ is a set.
We denote by $\MON{X}$ the free monoid generated by $X$, i.e.,
the set of finite words of $X$ with concatenate operation as multiplication.
We write the empty word (the multiplicative identity) as $\varepsilon$. 
For $L\subseteq\MON{X}$, we define $\AVO(L)=\{W'\in\MON{X}\mid \forall W\in L, W\NOTDIV W'\}$
where we denote by $W|W'$ for $W,W'\in\MON{X}$ if $AWB=W'$ for some $A,B\in\MON{X}$.

In the rest, $\cor$ is a commutative ring with 1.
We define $\FP{X}$ to be the monoid ring of $\MON{X}$ over $\cor$. This is a free associative $\cor$-algebra with 1 generated by $X$, i.e., the $\cor$-algebra
of noncommutative (NC, for short) polynomials of variables $X$ with coefficients $\cor$.

\begin{table}
\begin{tabular}{clclcll}
$n$ & $G_n$              & $|G_n|$ & $H_n$           & $\ell$ & $X_n$         & {order on $X_n^{\ast}$} \\ \hline
4   & $s^3=t^3=1$        & 24      & $s^3=a_1s^2+a_2s+1$ & 2  & {$\{s,t\}$}   & {${\RLEX_{(X_n,t>s)}}$}   \\
{}  & $sts=tst$          & {}     & $t^3=a_1t^2+a_2t+1$ & {}  & {}            & {}                     \\
{}  & (note $s\equiv t$) & {}     & $tst=sts$          & {}  & {}            & {}                     \\ \hline
5   & $s^3=t^3=1$        & 72     & $s^3=a_1s^2+a_2s+1$ & 4   & {$\{s,t\}$}   & {${\RLEX_{(X_n,t>s)}}$}   \\
{}  & $stst=tsts$        & {}     & $t^3=a_3t^2+a_4+1$  & {}  & {}            & {}                     \\
{}  & {}                 & {}     & $stst=tsts$        & {}  & {}            & {}                     \\ \hline
6   & $s^3=t^2=1$        & 48     & $s^3=a_1s^2+a_2s+1$ & 3   & {$\{s,t\}$}   & {${\RLEX_{(X_n,t>s)}}$}   \\
{}  & $ststst=tststs$    & {}     & $t^2=a_3t+1$       & {}  & {}            & {}                     \\
{}  & {}                 & {}     & $ststst=tststs$    & {}  & {}            & {}                    \\ \hline
7   & $t^2=s^3=u^3=1$     & 144    & $t^2=a_1t+1$       & 5   & {$\{s,t,u\}$} & {${\RLEX_{(X_n,u>t>s)}}$} \\
{}  & $tsu=sut=uts$      & {}     & $u^3=a_2u^2+a_3u+1$ & {}  & {}            & {}                    \\
{}  & {}                 & {}     & $s^3=a_4s^2+a_5s+1$ & {}  & {}            & {}                    \\
{}  & {}                 & {}     & $tsu=sut$          & {}  & {}            & {}                    \\
{}  & {}                 & {}     & $sut=uts$          & {}  & {}            & {}      
\end{tabular}
\caption{(modified) BMR presentations for tetrahedral family}
\label{tab1}
\end{table}

\begin{table}
\begin{tabular}{clclcll}
$n$ & $G_n$                    & $|G_n|$ & $H(G_n)$                  & $\ell$ & $X_n$        & {order on $X_n^{\ast}$} \\ \hline
8  & $s^4=t^4=1$               & 96      & $s^4=a_1s^3+a_2s^2+a_3s+1$ & 3      & {$\{s,t\}$}  & {${\RLEX_{(X_n,t>s)}}$}   \\
{} & $sts=tst$                 & {}      & $t^4=a_1t^3+a_2t^2+a_3t+1$ & {}     & {}          & {}                     \\
{} & (note $s\equiv t$)        & {}      & $tst=sts$                 & {}     & {}          & {}                     \\ \hline
9  & $s^4=t^2=1$               & 192     & $s^4=a_1s^3+a_2s^2+a_3s+1$ & 4      & {$\{s,t\}$}  & {${\RLEX_{(X_n,t>s)}}$}   \\
{} & $ststst=tststs$           & {}      & $t^2=a_4t+1$              & {}     & {}           & {}                    \\
{} & {}                        & {}      & $ststst=tststs$           & {}    & {}            & {}                    \\ \hline
10 & $s^4=t^3=1$                & 288     & $s^4=a_1s^3+a_2s^2+a_3s+1$ & 5     & {$\{s,t\}$}   & {${\RLEX_{(X_n,t>s)}}$}  \\
{} & $stst=tsts$               & {}      & $t^3=a_4t^2+a_5t+1$        & {}    & {}            & {}                    \\
{} & {}                        & {}      & $stst=tsts$               & {}    & {}            & {}                    \\ \hline
11 & $t^2=s^3=u^4=1$            & 576     & $t^2=a_1t+1$              & 6     & {$\{s,t,u\}$} & {${\RLEX_{(X_n,u>t>s)}}$} \\
{} & $tsu=sut=uts$              & {}     & $u^4=a_2u^3+a_3u^2+a_4u+1$ & {}    & {}            & {}                    \\
{} & {}                         & {}     & $s^3=a_5s^2+a_6s+1$        & {}    & {}            & {}                    \\
{} & {}                         & {}     & $tsu=sut$                 & {}    & {}            & {}                    \\
{} & {}                         & {}     & $sut=uts$                 & {}    & {}            & {}                    \\ \hline
12 & $t^2=s^2=u^2=1$             & 48     & $t^2=a_1t+1$              & 1     & {$\{s,t,u\}$} & {${\RLEX_{(X_n,u>t>s)}}$} \\
{} & $tsut=suts=utsu$           & {}     & $s^2=a_1s+1$              & {}    & {}            & {}                    \\
{} & (note $s\equiv t\equiv u$) & {}     & $u^2=a_1u+1$              & {}    & {}            & {}                    \\
{} & {}                         & {}     & $tsut=suts$              & {}     & {}            & {}                    \\
{} & {}                         & {}     & $utsu=suts$              & {}     & {}            & {}                    \\ \hline
13 & $t^2=s^2=u^2=1$             & 96     & $t^2=a_1t+1$             & 2      & {$\{s,t,u\}$} & {${\RLEX_{(X_n,u>t>s)}}$} \\
{} & $tsuts=utsut$              & {}     & $s^2=a_2s+1$             & {}     & {}            & {}                    \\
{} & $suts=utsu$                & {}     & $u^2=a_2u+1$             & {}     & {}            & {}                    \\
{} & (note $s\equiv u$)         & {}     & $utsut=tsuts$            & {}    & {}             & {}                    \\
{} & {}                         & {}     & $utsu=suts$              & {}    & {}             & {}                    \\ \hline
14 & $s^3=t^2=1$                & 144    & $s^3=a_1s^2+a_2s+1$       & 3     & {$\{s,t\}$}    & {${\RLEX_{(X_n,t>s)}}$}  \\
{} & $stststst=tstststs$        & {}     & $t^2=a_3t+1$             & {}     & {}            & {}                    \\
{} & {}                         & {}     & $stststst=tstststs$      & {}    & {}             & {}                    \\ \hline
15 & $s^2=t^2=u^3=1$             & 288   & $t^2=a_1t+1$              & 4     & {$\{s,t,u\}$}  & {${\RLEX_{(X_n,u>t>s)}}$} \\
{} & $tsu=sut$                  & {}     & $s^2=a_2s+1$             & {}     & {}            & {}                    \\
{} & $utsus=tsusu$              & {}     & $u^3=a_3u^2+a_4u+1$      & {}     & {}            & {}                     \\
{} & {}                         & {}     & $tsusu=utsus$              & {}     & {}            & {}                     \\
{} & {}                         & {}     & $tsu=sut$          & {}     & {}            & {}      
\end{tabular}
\caption{(modified) BMR presentations for octahedral family}
\label{tab2}
\end{table}

\begin{table}
\begin{tabular}{clclcll}
$n$ & $G_n$                     & $|G_n|$ & $H(G_n)$                         & $\ell$ & $X_n$        & {order on $X_n^{\ast}$} \\ \hline
16 & $s^5=t^5=1$                & 600     & $s^5=a_1s^4+a_2s^3+a_3s^2+a_4s+1$ &  4     & {$\{s,t\}$}  & {${\LEX_{(X_n,t>s)}}$}  \\
{} & $sts=tst$                  & {}      & $t^5=a_1t^4+a_2t^3+a_3t^2+a_4t+1$ & {}     & {}           & {}                    \\
{} & (note $s\equiv t$)         & {}      & $tst=sts$                       & {}     & {}            & {}                    \\ \hline
17 & $s^5=t^2=1$                & 1200    & $s^5=a_1s^4+a_2s^3+a_3s^2+a_4s+1$ &  5     & {$\{s,t\}$}   & {${\LEX_{(X_n,t>s)}}$}  \\
{} & $ststst=tststs$            & {}      & $t^2=a_5t+1$                    &  {}    & {}            & {}                    \\
{} & {}                         & {}      & $tststs=ststst$                 & {}     & {}            & {}                    \\ \hline
18 & $t^5=s^3=1$                & 1800    & $t^5=a_1t^4+a_2t^3+a_3t^2+a_4t+1$ & 6      & {$\{s,t\}$}   & {${\RLEX_{(X_n,t>s)}}$}  \\
{} & $stst=tsts$                & {}      & $s^3=a_5s^2+a_6s+1$              & {}     & {}            & {}                    \\
{} & {}                         & {}      & $stst=tsts$                     & {}     & {}            & {}                    \\ \hline
19 & $s^2=t^3=u^5=1$             & 3600   & $s^2=a_1s+1$                     & 7      & {$\{s,t,u\}$} & {${\RLEX_{(X_n,u>t>s)}}$} \\
{} & $stu=tus=ust$              & {}      & $u^5=a_2u^4+a_3u^3+a_4u^2+a_5u+1$            & {}     & {}            & {}                    \\
{} & {}                         & {}      & $t^3=a_6t^2+a_7t+1$ & {}     & {}            & {}                    \\
{} & {}                         & {}      & $stu=tus$                       & {}     & {}            & {}                    \\
{} & {}                         & {}      & $ust=tus$                       & {}     & {}            & {}                    \\ \hline
20 & $s^3=t^3=1$                & 360      & $s^3=a_1s^2+a_2s+1$             & 2      & {$\{s,t\}$}   & {${\RLEX_{(X_n,t>s)}}$}   \\
{} & $ststs=tstst$              & {}      & $t^3=a_1t^2+a_2t+1$              & {}     & {}            & {}                    \\
{} & (note $s\equiv t$)         & {}      & $tstst=ststs$                   & {}     & {}            & {}                    \\ \hline
21 & $s^3=t^2=1$                & 720      & $s^3=a_1s^2+a_2s+1$             & 3      & {$\{s,t\}$}   & {${\RLEX_{(X_n,t>s)}}$}   \\
{} & $ststststst$               & {}      & $t^2=a_3t+1$                    & {}     & {}            & {}                     \\
{} & $=tststststs$              & {}      & $ststststst=tststststs$         & {}     & {}            & {}                    \\ \hline
22 & $s^2=t^2=u^2=1$             & 240     & $t^2-a_1t-1$                    & 1      & {$\{s,t,u\}$} & {${\RLEX_{(X_n,u>t>s)}}$} \\
{} & $tsuts=sutsu$              & {}      & $s^2-a_1s-1$                    & {}     & {}            & {}                     \\
{} & $\phantom{tsuts}=utsut$    & {}      & $u^2-a_1u-1$                    & {}     & {}            & {}                     \\
{} & (note $s\equiv t\equiv u$) & {}      & $sutsu=tsuts$                   & {}    & {}            & {}                      \\
{} & {}                         & {}      & $sutsu=utsut$                   & {}    & {}            & {}
\end{tabular}
\caption{(modified) BMR presentations for icosahedral family}
\label{tab3}
\end{table}

\section{Bergman's diamond lemma}
We briefly review ~\cite[\S1]{Ber}, a dialect of NC Gr\"obner basis theory.

\subsection{Reductions}
A reduction system (RS, for short) for $\FP{X}$ is a set of reductions. 
When we say a finite ordered RS, it means a finite sequenced collection of reductions $S=(s_1,\cdots,s_n)$.
For a RS $S$, we put $\LT(S)=\{W\mid (W,f)\in S\}(\subseteq\MON{X})$.

\begin{Def}
\begin{enumerate}
\item A reduction for $\FP{X}$ is a pair $(W,f)\in \MON{X}\times \FP{X}$.
\item For a reduction $s=(W,f)$ and $A,B\in\MON{X}$, we define a $\cor$-linear map $r_{A,B,s}:\FP{X}\to \FP{X}$ by
$
r_{A,B,s}\Bigl(\sum_{C}a_C C\Bigr)=a_{AWB}AfB+\sum_{C\ne AWB}a_C C.
$
\end{enumerate}
\end{Def}

We say that $r_{A,B,s}$ acts non-trivially for $\sum_{C}a_C C\in \FP{X}$ when $a_{AWB}\ne0$.

\subsection{Orders on $\MON{X}$}
\begin{Def}
A total order $\geq$ on $\MON{X}$ is called
\begin{enumerate}
\item good if
\begin{itemize}
\item it is artinian, i.e., there is no countably infinite $x_1>x_2>\cdots$ in $X$
\item it respects monoid structure on $\MON{X}$, i.e., $AWB\geq A'WB'$ whenever $A\geq A',B\geq B'$
for all $A,A',B,B',W\in\MON{X}$
\end{itemize}
\item compatible with a RS $S$ if $W>C$ for $(W,f)\in S$ with $a_C\ne 0$ where $f=\sum_{C}a_C C$
\end{enumerate}
\end{Def}

\begin{Ex}
For a set $X$ with a total order $\mygeq$, the quasi-lexicographical order $\geq_{\LEX_{(X,\mygeq)}}$ (reps. the reverse quasi-lexicographical order $\geq_{\RLEX_{(X,\mygeq)}}$) 
on $\MON{X}$ is defined for words $x=x_1\cdots x_a$ and $y=y_1\cdots y_b$ as $x >_{\LEX(X,\mygeq)} y$ (resp. $x >_{\RLEX(X,\mygeq)} y$) if
\begin{enumerate}
\item $a> b$ or
\item $a=b$, $1\leq\exists i\leq a,x_i\myge y_i,1\leq\forall j<i,x_j=y_j$ (resp. $i<\forall j\leq a,x_j=y_j$)
\end{enumerate}
\end{Ex}

\subsection{A normalization}
For a subset $Y\subseteq\MON{X}$, $\cor[Y]$ means the free $\cor$-submodule of $\FP{X}$ with basis $Y$.
For a RS $S$ and $g\in \cor[\AVO(\LT(S))]$,
we have $r_{A,B,s}(g)=g$ for $A,B\in\MON{X}$ and $s\in S$, i.e., each reduction $r_{A,B,s}$ acts
on $g$ trivially.

\begin{algorithm}
\caption{Calculate $\RED(g;S)$ if halt}
\label{algred}
\begin{algorithmic}
\Require{a finite set $X$, a total order $\geq$ on $\MON{X}$ and a NC polynomial $g\in\FP{X}$}
\Require{a finite ordered RS $S=(s_1,\cdots,s_n)$ for $\FP{X}$ where $s_i=(W_i,f_i)$}
\Loop
\State{$Z:=\{C\in\MON{X}\mid a_C\ne 0,1\leq\exists i\leq n,W_i|C\}$ (where $g=\sum_{C}a_CC$)}
\If{$Z=\emptyset$}
\State{return $g$}
\Else
\State{$W:=\max Z$, $m:=\min \{1\leq i\leq n\mid W_i|W\}$}
\State{take $A,B\in\MON{X}$ with $AW_mB=W$ where $\ell(A)$ is minimum}
\State{replace $g$ with $r_{A,B,s_m}(g)$}
\EndIf
\EndLoop
\end{algorithmic}
\end{algorithm}

\begin{Def}
Let $S=(s_1,\cdots,s_n)$ be a finite ordered RS where $s_i=(W_i,f_i)$ and fix a total order $\geq$ on $\MON{X}$.
For $g=\sum_{C}a_CC\in\FP{X}$, $\RED(g;S)\in\cor[\AVO(\LT(S))]$ is the output of algorithm \ref{algred}.
Note that there is a possibility that the algorithm never visits \verb|return g| (i.e., loop forever).
In this case, we do not define $\RED(g;S)$.
\end{Def}

\begin{Rem}\label{redterm}
It is clear that
$\RED(g;S)$ is defined for all $g\in\FP{X}$ when $S$ is a finite ordered RS with a compatible good order on $\MON{X}$ 
(where $X$ is a finite set).
\end{Rem}

\subsection{Ambiguities, local confluences and strong normalizabilities}

\begin{Def}
For a RS $S$, we
\begin{enumerate}
\item associate a two-sided ideal $I_S\subseteq \FP{X}$ generated by $\{W-f\mid (W,f)\in S\}$.
\item say a 5-tuple $(\sigma,\tau,A,B,C)$ overlap ambiguity if $W_{\sigma}=AB,W_{\tau}=BC$ where $A,B,C\in\MON{X}\setminus\{\varepsilon\}$ and $\sigma=(W_{\sigma},f_{\sigma}),\tau=(W_{\tau},f_{\tau})\in S$.
\item say a 5-tuple $(\sigma,\tau,A,B,C)$ inclusion ambiguity if $W_{\sigma}=B,W_{\tau}=ABC$ where $A,B,C\in\MON{X}$ with $\sigma\ne\tau$ and $\sigma=(W_{\sigma},f_{\sigma}),\tau=(W_{\tau},f_{\tau})\in S$. 
\item define an overlap (resp. inclusion) ambiguity $(\sigma,\tau,A,B,C)$ is resolvable if 
\begin{align*}
r_{A_1,B_1,s_1}\circ\cdots\circ r_{A_u,B_u,s_u}(x)=r_{A'_1,B'_1,t_1}\circ\cdots\circ r_{A'_v,B'_v,t_v}(y)
\end{align*}
for some $A_1,\cdots,A_u,A'_1,\cdots,A'_v,B_1,\cdots,B_u,B'_1,\cdots,B'_v\in\MON{X}$ and $s_1,\cdots,s_u,t_1,\cdots,t_v\in S$
where $x=f_{\sigma}C,y=Af_{\tau}$ (resp. $x=f_{\sigma},y=Af_{\tau}C$) and $u,v\geq 0$.
\end{enumerate}
\end{Def}

Ambiguities are analogous to critical pairs in the theory of rewriting (see ~\cite[\S6.2]{BN}).
Bergman's diamond lemma is a ring theoretic analog of Newman's lemma or critical pair lemma (see ~\cite[Lemma 2.7.2,Theorem 6.2.4]{BN}).

\begin{Thm}[{\cite[Theorem 1.2]{Ber}}]\label{berthm}
Let $S$ be a RS for $\FP{X}$ having a compatible good order on $\MON{X}$.
Assume all ambiguities are resolvable.
Then, for $g\in\FP{X}$ and
for $A_1,\cdots,A_n,B_1,\cdots,B_n\in\MON{X},s_1,\cdots,s_n\in S$ such that $r_{A_is_iB_i}$ acts non-trivially on 
$r_{A_{i-1},B_{i-1},s_{i-1}}\circ\cdots\circ r_{A_1,B_1,s_1}(g)$ for all $1\leq i\leq n$ and
$h:=(r_{A_{n},B_{n},s_{n}}\circ\cdots\circ r_{A_1,B_1,s_1})(g)\in\cor[\AVO(\LT(S))]$,
$h$ only depends on $g$.
\end{Thm}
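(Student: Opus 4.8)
The plan is to read the statement as the conjunction of two facts about the rewriting system attached to $S$ and $\geq$: \emph{termination} (every chain of non‑trivially acting reductions is finite) and \emph{confluence} (uniqueness of the irreducible element reached), and to obtain confluence from \emph{local confluence} via Newman's lemma (\cite[Lemma 2.7.2]{BN}). Since the theorem's hypothesis already exhibits a \emph{finite} terminating sequence ending in $\cor[\AVO(\LT(S))]$, the content to prove is: any two such sequences on the same $g$ produce the same $h$.

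\textbf{Termination.} First I would attach to each $g=\sum_C a_C C\in\FP{X}$ the finite multiset $\mu(g)$ of those $C\in\MON{X}$ with $a_C\neq0$. A non‑trivial application of $r_{A,B,s}$ with $s=(W,f)$ removes the occurrence of $AWB$ from $\mu(g)$ and inserts (with multiplicities) only words $ACB$ with $C$ a monomial of $f$; compatibility of $\geq$ with $S$ gives $C<W$, and the fact that $\geq$ respects the monoid structure gives $ACB<AWB$. Hence $\mu(r_{A,B,s}(g))$ is strictly smaller than $\mu(g)$ in the multiset extension of $\geq$, which is well‑founded because $\geq$ is good (artinian and total). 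So no infinite chain of non‑trivial reductions exists, every maximal chain is finite, and its end lies in $\cor[\AVO(\LT(S))]$. This places us in the setting of Newman's lemma, so it remains to prove local confluence.

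\textbf{Local confluence.} Suppose $r_{A,B,\sigma}$ and $r_{A',B',\tau}$ both act non‑trivially on $g$, producing $g_1$ and $g_2$; write $W_\sigma=\LT(\sigma)$, $W_\tau=\LT(\tau)$, and let $C_1=AW_\sigma B$, $C_2=A'W_\tau B'$ be the monomials they hit. The principal case is $C_1=C_2=:C$. Inside the word $C$ the two marked occurrences of $W_\sigma$ and $W_\tau$ are either disjoint, or they overlap, or one contains the other. In the disjoint case the two reductions commute once we focus on the letter $C$ (and on the remaining monomials, all of which they affect only through words $<C$), so $g_1$ and $g_2$ reduce to a common element. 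In the overlapping (resp. inclusion) case the data $(\sigma,\tau,\cdot,\cdot,\cdot)$ form an overlap (resp. inclusion) ambiguity, and its resolvability gives, for the minimal ambiguous words, a coincidence of iterated reductions; I would then \emph{lift} that coincidence first to the ambient word $C$ and then to $g$, using again that every collateral monomial produced along the way is $<C$, so the lifted reductions never disturb monomials $\geq C$ other than $C$ itself. When instead $C_1\neq C_2$, reducing at $C_1$ can only change coefficients of monomials $\le\max(C_1,C_2)$, so I would run a well‑founded induction on the monomials of $g$ — equivalently, following Bergman, show that the set of ``reduction‑unique'' elements is a $\cor$‑submodule of $\FP{X}$ containing $\cor[\AVO(\LT(S))]$ and stable under the moves required, and then that it is all of $\FP{X}$: at the maximal monomial there is nothing to reconcile, and going down, any discrepancy between $g_1$ and $g_2$ lives among monomials $<\max(C_1,C_2)$, where the inductive hypothesis applies once the two local pictures at $C_1$ and at $C_2$ have been reconciled by the disjoint/overlap/inclusion analysis above.

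\textbf{Main obstacle.} The delicate point, and where I expect to spend the real effort, is the last step: because here a ``reduction'' is a $\cor$‑linear operator on all of $\FP{X}$ rather than a rewrite of a single word, a single reduction can resurrect or cancel lower monomials, so local confluence is a statement about the induced linear maps, not literally about one‑step word rewrites. Turning resolvability of an ambiguity — stated only for the minimal ambiguous word — into a common reduct \emph{in arbitrary context inside an arbitrary polynomial} forces the bookkeeping that all collateral monomials remain strictly below the monomial being rewritten, and this is exactly the place where goodness of $\geq$ (artinian, respecting multiplication) together with compatibility with $S$ is used essentially. The remaining ingredients — the disjoint‑occurrence commutation, the multiset termination argument, and the appeal to Newman's lemma — are routine.
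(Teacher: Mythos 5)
The paper does not prove this statement at all: it is quoted as Bergman's Theorem 1.2, and the citation to \cite{Ber} stands in for the proof, so the relevant comparison is with Bergman's own argument. Your outline is essentially that argument. Bergman does not invoke Newman's lemma by name, but his proof is exactly the combination you describe: termination extracted from compatibility of the good order with $S$ (he phrases it as reduction-finiteness rather than via your multiset measure, but the content is the same), followed by a well-founded induction along the order in which the only non-trivial local joins come from the resolvable overlap and inclusion ambiguities, all organized through the submodule of reduction-unique elements and the linearity statement \cite[Lemma 1.1]{Ber} --- precisely the device you name at the end. So yours is the same route in a Newman-lemma packaging, an analogy the paper itself points out.

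Two points to watch when you flesh out the case analysis. First, when the two reductions hit distinct monomials $C_1\neq C_2$ of $g$ (say $C_1>C_2$), the operators need not commute, and the second reduction may even act trivially after the first: applying $r_{A,B,\sigma}$ shifts the coefficient at $C_2$ by a multiple of a coefficient of $f_{\sigma}$, so the two composites differ by a scalar multiple of $C_2-A'f_{\tau}B'$, and one further $\tau$-step (or none, if that scalar vanishes) is needed to join them. Joinability, not commutation, is what Newman's lemma requires, and your reduction-unique-submodule formulation absorbs this; just do not lean on the literal word ``commute''. Second, for disjoint occurrences of $W_{\sigma}$ and $W_{\tau}$ inside a single word $C$, after the $\sigma$-step the $\tau$-occurrence sits inside each of the several monomials of $Af_{\sigma}B$, so joining takes one $\tau$-step per monomial. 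Neither point breaks the plan; they are exactly the bookkeeping you flagged as the main obstacle, and it is carried out in the proof of Theorem 1.2 of \cite{Ber}.
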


Thus, we get a well-defined map $r_S:\FP{X}\to\cor[\AVO(\LT(S))],g\mapsto (r_{A_{n},B_{n},s_{n}}\circ\cdots\circ r_{A_1,B_1,s_1})(g)$.
By ~\cite[Lemma 1.1]{Ber}, $r_S$ is $\cor$-linear.

\begin{Cor}[{\cite[Theorem 1.2]{Ber}}]
As a $\cor$-algebra, $\FP{X}/I_S$ is isomorphic to $\cor[\AVO(\LT(S))]$ with multiplication $g\cdot h=r_S(gh)$ for $g,h\in\cor[\AVO(\LT(S))]$.
\end{Cor}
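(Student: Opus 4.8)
The plan is to derive the corollary directly from the $\cor$-linear map $r_S\colon\FP{X}\to\cor[\AVO(\LT(S))]$ supplied by Theorem~\ref{berthm}: I would show that $\KER r_S=I_S$, that $r_S$ is surjective, and that the multiplication transported from $\FP{X}/I_S$ to $\cor[\AVO(\LT(S))]$ along the induced isomorphism is exactly $(g,h)\mapsto r_S(gh)$.

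First I would establish the identity $r_S\circ r_{A,B,s}=r_S$ for every $s\in S$ and $A,B\in\MON{X}$. If $r_{A,B,s}$ acts trivially on $g$ there is nothing to prove; otherwise I would take any reduction sequence computing $r_S(r_{A,B,s}(g))$ (one exists by Remark~\ref{redterm}) and prepend $r_{A,B,s}$ to it, obtaining a reduction sequence for $g$ that terminates in $\cor[\AVO(\LT(S))]$, so by Theorem~\ref{berthm} its value is $r_S(g)$. Next I would use this to pin down $\KER r_S$. Because $\MON{X}$ is a $\cor$-basis of $\FP{X}$, the elements $A(W-f)B$ with $(W,f)\in S$ and $A,B\in\MON{X}$ span $I_S$ over $\cor$; for such an element $r_{A,B,s}(AWB)=AfB$, so the preceding identity gives $r_S(A(W-f)B)=r_S(AWB)-r_S(AfB)=0$, whence $I_S\subseteq\KER r_S$ by $\cor$-linearity. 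For the reverse inclusion I would note that a single reduction changes an element by $r_{A,B,s}(h)-h=-a_{AWB}\,A(W-f)B\in I_S$, so telescoping the finitely many reductions used to form $r_S(g)$ yields $g-r_S(g)\in I_S$; in particular $r_S(g)=0$ forces $g\in I_S$.

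With $\KER r_S=I_S$ in hand, I would observe that $r_S$ restricts to the identity on $\cor[\AVO(\LT(S))]$ (no reduction acts non-trivially there, by the normalization remark), hence $r_S$ is onto and induces a $\cor$-module isomorphism $\overline{r_S}\colon\FP{X}/I_S\ISOM\cor[\AVO(\LT(S))]$. Finally, letting $\pi\colon\FP{X}\to\FP{X}/I_S$ denote the projection, so that $\overline{r_S}\circ\pi=r_S$, for $g,h\in\cor[\AVO(\LT(S))]$ the product of $\overline{r_S}^{-1}(g)$ and $\overline{r_S}^{-1}(h)$ in the quotient is $\pi(gh)$, which $\overline{r_S}$ sends to $r_S(gh)$; thus $\overline{r_S}$ is an algebra isomorphism onto $\cor[\AVO(\LT(S))]$ with multiplication $g\cdot h=r_S(gh)$, and this multiplication is automatically associative and unital.

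The substantive work is all in Theorem~\ref{berthm}, so I do not expect a genuine obstacle. The one point that needs a little care is the identity $r_S\circ r_{A,B,s}=r_S$: one must split into the trivial and non-trivial cases to stay within the hypotheses of Theorem~\ref{berthm}, and one must invoke Remark~\ref{redterm} to know that the prepended reduction sequence exists.
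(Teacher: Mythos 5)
Your proof is correct and is essentially the expected argument: the paper gives no independent proof of this corollary (it simply cites Bergman's Theorem 1.2), and your route---establishing $r_S\circ r_{A,B,s}=r_S$, deducing $\KER r_S=I_S$ from the telescoping identity $g-r_S(g)\in I_S$, noting $r_S$ is the identity on $\cor[\AVO(\LT(S))]$, and transporting the quotient multiplication---is exactly the standard derivation of the corollary from the uniqueness statement of Theorem~\ref{berthm}. The only cosmetic point is that the existence of a terminating sequence of non-trivial reductions should be attributed to compatibility with the good (artinian) order on $\MON{X}$ rather than to Remark~\ref{redterm}, whose hypotheses (a finite set $X$ and a finite ordered RS fed to the algorithm) are not part of the corollary's setting.
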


\section{Proof of Theorem \ref{maintheorem}}
\subsection{Minimal reduction systems}
The following is analogous to the uniqueness of reduced Gr\"obner basis (see ~\cite[Proposition 6 in \S2.7]{CLO}) and seems to be well known.
For completeness, we state and give proofs.

\begin{Prop}\label{minS}
Let $S$ be a RS for $\FP{X}$ having a compatible good order on $\MON{X}$ whose ambiguities are resolvable.
Take $s=(W,f)\in S$ and assume $\exists t=(W',f')\in S':=S\setminus\{s\},W'|W$.
Then, all the ambiguities of $S'$ are resolvable and $I_S=I_{S'}$.
\end{Prop}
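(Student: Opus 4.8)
The statement is a removal lemma for reduction systems: if one rule's leading monomial $W$ is divisible by the leading monomial $W'$ of another rule, then deleting the first rule changes neither the ideal nor the resolvability of ambiguities. First I would establish $I_S = I_{S'}$. The inclusion $I_{S'} \subseteq I_S$ is trivial since $S' \subseteq S$. For the reverse, it suffices to show $W - f \in I_{S'}$, i.e.\ that the generator we are discarding already lies in the smaller ideal. Write $W = AW'B$ for suitable $A,B \in \MON{X}$. Since $S$ is resolvable and has a compatible good order, Theorem~\ref{berthm} gives a well-defined $\cor$-linear map $r_S$, and the Corollary identifies $\FP{X}/I_S \cong \cor[\AVO(\LT(S))]$. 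The key observation is that $r_{S'}(W-f)$ must be computed: applying $t = (W',f')$ to $W = AW'B$ gives $Af'B$, which is $\myge$-smaller than $W$ in the compatible order; iterating, $r_{S'}(W-f)$ is a $\cor$-combination of monomials strictly below $W$. I need to argue this reduces to $0$. Here is where I use that $S$ itself is resolvable: $W - f \in I_S$ trivially, and I claim $I_S = I_{S'}$ can instead be bootstrapped by showing $r_S$ and $r_{S'}$ agree — but that is circular.

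Cleaner route: prove $I_S = I_{S'}$ directly by showing $W - f \in I_{S'}$ using resolvability of the inclusion ambiguity $(t, s, A, W', B)$ in $S$ (note $t \ne s$, as their leading terms differ or, if equal, the rules coincide and there is nothing to prove — so assume $W' \ne W$, handling $W' = W$, $f' \ne f$ separately via $W - f = (W-f') - (f - f') $ and resolvability forcing $f = f'$). Resolvability of this ambiguity means $f'B$ and $Af = Af$ — wait, the inclusion ambiguity data is $x = f_\sigma = f'$ reduced against $y = Af_\tau C$; with $\sigma = t$, $\tau = s$, the ambiguity says the reductions of $Af'B$ (replacing $W'$ inside $W$) and of $f$ (replacing all of $W$) have a common further reduction. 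Since both are reductions of expressions $\equiv W \pmod{I_S}$, and more importantly both $Af'B$ and $f$ are combinations of terms $< W$, their common $S$-reduct $r$ satisfies $f \equiv Af'B \pmod{I_S}$ via rules all applied to terms $\le$ those appearing, hence $W - f = (W - Af'B) + (Af'B - f) = A(W' - f')B + (Af'B - f)$, where the first summand is manifestly in $I_{S'}$ and the second is in $I_{S'}$ by induction on the order (all terms of $Af'B$ and $f$ are $< W$, and the reductions witnessing their equivalence use only rules of $S$; one must check they use only rules of $S'$, which holds because $s$ can only be applied at a term $\ge W$, impossible here). This Noetherian induction on the compatible good order is the technical heart.

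**Resolvability of $S'$-ambiguities.** Once $I_S = I_{S'}$ is known, for the ambiguities of $S'$: each ambiguity of $S'$ is also an ambiguity of $S$ (since $S' \subseteq S$), hence resolvable via some chain of reductions in $S$. I must replace each use of $s$ in such a chain by a chain of $S'$-reductions. The mechanism: whenever $r_{A_0, B_0, s}$ is applied to some term $A_0 W B_0 = A_0 A W' B B_0$, replacing it instead by $r_{A_0 A, B B_0, t}$ applied to the same position yields $A_0 A f' B B_0$ in place of $A_0 f B_0$; but these two differ by an element of $I_{S'}$ supported on terms $<$ the term replaced, so by the diamond lemma applied to $S'$ itself — more precisely by the well-definedness part of Theorem~\ref{berthm} once we know $S'$ is resolvable — hmm, again circular. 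The correct order of operations: prove $I_S = I_{S'}$ first (as above, purely via Noetherian induction, no diamond lemma needed), then observe that $\cor[\AVO(\LT(S))] \hookrightarrow \cor[\AVO(\LT(S'))]$ and that $\dim$ counting / the surjection $\cor[\AVO(\LT(S'))] \to \FP{X}/I_{S'} = \FP{X}/I_S \cong \cor[\AVO(\LT(S))]$ combined with $\AVO(\LT(S)) \supseteq \AVO(\LT(S'))$ forces... no, $\AVO(\LT(S)) \supseteq \AVO(\LT(S'))$ since fewer leading terms means more surviving monomials. So we get $\cor[\AVO(\LT(S'))] \twoheadrightarrow \FP{X}/I_{S'} \cong \cor[\AVO(\LT(S))]$, and $\AVO(\LT(S')) \subseteq \AVO(\LT(S))$. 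For this surjection from the smaller set onto the algebra with the larger basis to exist, we'd need it to be injective too, which would give resolvability of $S'$ by the converse direction of Bergman's lemma.

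**Main obstacle.** The genuine difficulty is not the ideal equality — that is a routine Noetherian induction once set up carefully — but verifying resolvability of $S'$'s ambiguities \emph{without} invoking the diamond lemma conclusion for $S'$ circularly. I expect the clean argument is: resolvability of an ambiguity is equivalent to that ambiguity being "resolvable relative to the congruence generated by $S$", and since the congruences for $S$ and $S'$ coincide ($I_S = I_{S'}$), and the compatible good order is the same, an ambiguity of $S'$ resolvable in $S$ can have its $s$-applications rewritten: each term where $s$ applies has the form $U$ with $W \mid U$, hence $W' \mid U$, so $t$ applies there, and $Uf$-type and $Uf'$-type results are congruent modulo $I_{S'}$ via terms strictly $<$ that term — then close up by a secondary Noetherian induction ("every element of $\cor[\text{stuff} < U]$ that lies in $I_{S'}$ reduces to $0$ under $S'$"), which is exactly the well-known fact that a RS with compatible good order whose ambiguities relevant below a given degree are resolvable has unique normal forms below that degree. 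So the proof is a careful double induction on the good order; I would structure it as: (1) $I_S \subseteq I_{S'}$ by showing $W - f \in I_{S'}$ via induction on $W$; (2) resolvability of $S'$ by taking any $S$-resolution of a given $S'$-ambiguity, surgically replacing $s$ by $t$-moves, and absorbing the discrepancies using (1) together with induction on the order. The bookkeeping in step (2) is the part most likely to hide a subtlety.
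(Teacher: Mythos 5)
Your first half is essentially workable: writing $W=AW'B$, the inclusion ambiguity $(t,s,A,W',B)$ of $S$ is resolvable by hypothesis, all monomials of $Af'B$ and of $f$ are $<W$ by compatibility, and since $\varepsilon$ is the minimum of a good order a reduction by $s$ can never act non-trivially on an element supported on monomials $<W$; hence the witnessing chains use $s$ only trivially, so $Af'B\equiv f \pmod{I_{S'}}$ and $W-f=A(W'-f')B+(Af'B-f)\in I_{S'}$. (Your aside that in the case $W=W'$ resolvability ``forces $f=f'$'' is false --- it only forces a common reduct --- but your decomposition does not need it.) This is a genuinely different and more elementary route for the ideal equality than the paper, which instead gets $I_S=I_{S'}$ from $r_S(W-f)=0$.

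The genuine gap is step (2), and you flag it yourself. Neither of your sketches is a proof: the surgical replacement of $s$-moves by $t$-moves requires absorbing the discrepancy between $A_0fB_0$ and $A_0Af'BB_0$ by confluence of $S'$ below the replaced term, which is essentially the statement being proved, and the promised ``careful double induction'' is never carried out; the basis-counting alternative garbles the inclusion (in fact $\AVO(\LT(S'))=\AVO(\LT(S))$, since $W'\NOTDIV U$ already forces $W\NOTDIV U$ --- with that equality, $I_S=I_{S'}$, and the converse implication in Bergman's theorem this route could be completed, but Theorem \ref{berthm} as quoted states only one direction and you stop at ``we'd need it to be injective too''). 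The paper's argument needs neither induction nor the converse: given an ambiguity of $S'$ with branches $x,y$, reduce $x$ by reductions of $S$ choosing at every step some $s_n\ne s$ acting non-trivially, which is always possible because wherever $s$ acts non-trivially $t$ does too (as $W'|W$); a maximal such chain is still a chain of $S$-reductions, and it terminates at an element on which no reduction of $S$ acts (again because $t$ covers $s$), so Theorem \ref{berthm} says its value is $r_S(x)$; doing the same for $y$ and using $r_S(x)=r_S(y)$ resolves the ambiguity entirely inside $S'$, and the same device applied to $W-f$ (whose normal form is $0$) yields $I_S=I_{S'}$ as well. That single observation --- build the reduction inside $S'$ from the start and invoke the uniqueness statement for $S$ on that very chain --- is the idea your proposal is missing.
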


\begin{proof}
Let $(\sigma,\tau,A,B,C)$ be an overlap (resp. inclusion) ambiguity of $S\setminus\{s\}$
where $\sigma=(W_{\sigma},f_{\sigma}),\tau=(W_{\tau},f_{\tau})$
and put $x=f_{\sigma}C,y=Af_{\tau}$ (resp. $x=f_{\sigma},y=Af_{\tau}C$).
Since ambiguities of $S$ are resolvable, we have $r_S(x)=r_S(y)$ by Theorem \ref{berthm}.
Take $A_n,B_n,\in\MON{X},s_n\in S$ recursive on $n\geq 1$ so that 
$r_{A_{n},B_{n},s_{n}}$ acts non-trivially on $(r_{A_{n-1},B_{n-1},s_{n-1}}\circ\cdots\circ r_{A_1,B_1,s_1})(x)$ if possible.
In each step, we can take $s_n\ne s$ because of $W'|W$.
Again by Theorem \ref{berthm}, this choice stops for some $N\geq 0$ and $r_S(x)=(r_{A_{N},B_{N},s_{N}}\circ\cdots\circ r_{A_1,B_1,s_1})(x)$.
Apply the same argument for $y$, we see that $(\sigma,\tau,A,B,C)$ is resolvable in $S'$.
$I_S=I_{S'}$ follows from $r_S(W-f)=0$ and the same argument.
\end{proof}

\begin{Def}\label{minrs}
We say a RS $S$ is minimal if $W\NOTDIV W'$ and $W'\NOTDIV W$ for any $s=(W,f),t=(W',f')\in S$ with $s\ne t$.
\end{Def}

\begin{Prop}\label{uniqS}
Let $S$ be a RS for $\FP{X}$ whose ambiguities are resolvable with a compatible good order on $\MON{X}$ and 
let $S'$ be a RS with the same assumption.
If $I_S=I_{S'}$ and both are minimal, then $\LT(S)=\LT(S')$ (and thus $|S|=|S'|$).
\end{Prop}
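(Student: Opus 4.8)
The plan is to argue by symmetry: it suffices to show $\LT(S)\subseteq\LT(S')$, since the reverse inclusion follows by swapping the roles of $S$ and $S'$. So fix $s=(W,f)\in S$; I want to produce $s'=(W',f')\in S'$ with $W'=W$. The natural source of such a $W'$ is the reduction process for $S'$: since $W-f\in I_S=I_{S'}$ and $I_{S'}$ is generated by $\{W''-f''\mid (W'',f'')\in S'\}$, the element $W-f$ does \emph{not} lie in $\cor[\AVO(\LT(S'))]$ (otherwise a nonzero element of that $\cor$-module would lie in $I_{S'}$, contradicting the isomorphism $\FP{X}/I_{S'}\cong\cor[\AVO(\LT(S'))]$ of the Corollary to Theorem \ref{berthm}, applied to the image of $W-f$, which is $0$). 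Hence some monomial appearing in $W-f$ with nonzero coefficient is divisible by some element of $\LT(S')$.

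The key step is to pin down \emph{which} monomial. Because the order is compatible with $S$, every monomial $C$ occurring in $f$ satisfies $C<W$, so $W$ is the strictly largest monomial appearing in $W-f$. Now I claim the offending divisible monomial must be $W$ itself. Indeed, suppose instead that $C<W$ is a monomial of $W-f$ divisible by some $W'\in\LT(S')$, say $C=AW'B$. Apply the reduction $r_{A,B,s'}$ (with $s'=(W',f')\in S'$) to $W-f$; since the order is compatible with $S'$, this strictly decreases the monomial $C$ while leaving $W$ untouched (as $W\ne C$ and $W$ is not affected). Iterating Algorithm \ref{algred} for $S'$ — which terminates by Remark \ref{redterm} — we may reduce $W-f$ to its normal form $r_{S'}(W-f)\in\cor[\AVO(\LT(S'))]$, and $r_{S'}(W-f)=0$ because $W-f\in I_{S'}$ and $r_{S'}$ is $\cor$-linear with kernel $I_{S'}$. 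But so long as no reduction ever touches $W$, the coefficient of $W$ in the running polynomial stays equal to $1$, and one checks no reduction step can ever \emph{create} a monomial $\geq W$ (again by compatibility of the order with $S'$, every reduction replaces a monomial by strictly smaller ones). So at some point a reduction with left term dividing $W$ must be applied; the first such step uses some $W'\in\LT(S')$ with $W'\mid W$.

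It remains to upgrade $W'\mid W$ to $W'=W$. This is where minimality of $S$ enters — but I must be careful, since minimality of $S$ controls divisibility \emph{within} $\LT(S)$, not between $\LT(S)$ and $\LT(S')$. The clean way is to run the whole argument symmetrically first: by the paragraphs above applied to $S$ we get, for our chosen $s=(W,f)\in S$, some $W'\in\LT(S')$ with $W'\mid W$; applying the same reasoning to that $s'=(W',f')\in S'$ (using $W'-f'\in I_{S'}=I_S$) we get some $W''\in\LT(S)$ with $W''\mid W'$. Then $W''\mid W'\mid W$ with $W'',W\in\LT(S)$, so minimality of $S$ forces $W''=W$, hence $W''=W'=W$, and in particular $W=W'\in\LT(S')$. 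Running over all $s\in S$ gives $\LT(S)\subseteq\LT(S')$, and symmetry gives equality; the cardinality statement is immediate. The main obstacle I anticipate is the bookkeeping in the second paragraph — verifying rigorously that the $S'$-reduction of $W-f$ cannot terminate, nor alter the coefficient of $W$, before some left term dividing $W$ is used — which is a careful but routine induction on the (artinian) order using compatibility with $S'$.
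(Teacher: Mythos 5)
Your proposal is correct and follows essentially the same route as the paper: since $r_{S'}(W-f)=0$, some $W'\in\LT(S')$ must divide $W$ (you spell out the coefficient-of-$W$ invariance that the paper leaves implicit), and then the symmetric step combined with minimality forces $W'=W$. No gaps; your extra bookkeeping in the second paragraph is exactly the routine verification needed.
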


\begin{proof}
Write $S=\{(W_i,f_i)\mid 1\leq i\leq s\}$ and $S'=\{(W'_i,f'_i)\mid 1\leq i\leq s'\}$.
Since $r_{S'}(W_1-f_1)=0$ by $W_1-f_1\in I_S=I_{S'}$, we have $1\leq \exists i\leq s',W'_i|W_1$. 
Similarly, $1\leq\exists j\leq s,W_j|W'_i$. By the minimalities, $j=1$ and $W_1=W'_i$.
Again, $1\leq \exists \ell\leq s',W'_{\ell}=W_2$. The minimality of $S$ implies $\ell\ne i$.
The rest is the same.
\end{proof}

The conclusion of Proposition \ref{uniqS} become ``$S=S'$ up to permutation'',
if we define the minimality of $S$ as Definition \ref{minrs} plus $\forall s=(W,f)\in S,f\in\cor[\IRR(\LT(S))]$.

\subsection{The avoiding pattern $T_n$}
\begin{Prop}\label{existsRS}
For $4\leq n\leq 22$, let $X_n$ be the set of generators of $G_n$ (identified with those of $H(G_n)$) 
and let $I_n\subseteq\mathbb{Z}[X_n^{\ast}]$ be a two-sided ideal generated by the relation of $G_n$ as in Table \ref{tab1},\ref{tab2},\ref{tab3}.
For any total order $\mygeq$ on $X_n$, choose $\geq_{\LEX_{(X_n,\mygeq)}}$ or $\geq_{\RLEX_{(X_n,\mygeq)}}$ as an order on $X_n^{\ast}$ and write it as $\geq$.
Then, there exists a finite RS $S_{n,\geq}$ for $\mathbb{Z}[X_n^{\ast}]$ compatible with $\geq$ 
such that $I_{S_{n,\geq}}=I_n$ and ambiguities are resolvable.
\end{Prop}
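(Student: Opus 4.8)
The plan is to prove Proposition \ref{existsRS} by running the Knuth--Bendix completion procedure, starting from the relations in Tables \ref{tab1}--\ref{tab3}, and to argue that it terminates. Concretely, given the fixed monomial order $\geq$ (either $\geq_{\LEX}$ or $\geq_{\RLEX}$), I would first turn each generating relation $r = 0$ of $I_n$ into an oriented reduction $(W,f)$ by taking $W = \LT(r)$ to be the $\geq$-largest monomial occurring in $r$ and $f = W - r$ (rescaled to make $W$ monic), so that the resulting initial RS $S^{(0)}$ is compatible with $\geq$ and satisfies $I_{S^{(0)}} = I_n$. Note that the quadratic/higher relations $s^{d_s} = a_1 s^{d_s-1} + \cdots$ and the braid-type relations $\cdots = \cdots$ in the tables have already been written with their intended leading term on the left, and for the $\LEX$ or $\RLEX$ orders chosen in each row this is indeed the $\geq$-maximal monomial; this is the only place the particular choice of order per row matters.

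Next I would iterate: form all overlap and inclusion ambiguities of the current RS $S^{(i)}$ (there are finitely many since $S^{(i)}$ is finite and each $W$ has bounded length), and for each ambiguity $(\sigma,\tau,A,B,C)$ compute $\RED(x;S^{(i)})$ and $\RED(y;S^{(i)})$ using Algorithm \ref{algred} — which halts by Remark \ref{redterm}, as $\geq$ is a compatible good order. If these two normal forms agree, the ambiguity is resolvable and nothing is added. If they differ by a nonzero NC polynomial $g = \RED(x;S^{(i)}) - \RED(y;S^{(i)})$, then $g \in I_{S^{(i)}}$ and $g \in \cor[\AVO(\LT(S^{(i)}))]$, so I adjoin the new reduction $(\LT(g), \LT(g) - g)$ (suitably normalized) to get $S^{(i+1)}$; this preserves compatibility with $\geq$ and the ideal, i.e. $I_{S^{(i+1)}} = I_{S^{(i)}} = I_n$. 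After each such step I also discard any reduction whose leading term is divisible by another reduction's leading term, which is legitimate by Proposition \ref{minS} and keeps the system minimal without changing the ideal or the resolvability status. When an iteration adds no new reductions, every ambiguity of the resulting $S^{(N)}$ is resolvable, and I set $S_{n,\geq} := S^{(N)}$; by construction it is finite, compatible with $\geq$, has $I_{S_{n,\geq}} = I_n$, and all its ambiguities are resolvable, which is exactly the claim.

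The main obstacle is termination of this completion loop: in general Knuth--Bendix need not halt. The way I would establish termination here is by exhibiting, for each $n$ and each admissible order $\geq$, the explicit finite completed system $S_{n,\geq}$ — this is precisely the reduction system $R_n$ of Definition \ref{finalRS} referred to in the introduction — and then verifying directly that it is already confluent, i.e. that \emph{all} of its (finitely many) ambiguities resolve. Since $S_{n,\geq}$ has only finitely many reductions, each with a leading monomial of bounded length, there are only finitely many overlap and inclusion ambiguities, and for each one the resolvability check reduces to running Algorithm \ref{algred} on two explicit NC polynomials and comparing outputs. This is a finite, if large, computation; it is exactly the $1$GB / $5$-day calculation advertised in the abstract, carried out once per $n \in \{4,\dots,22\}$ and per choice of $\LEX$/$\RLEX$ and total order on $X_n$. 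The conceptual content is light — the diamond lemma (Theorem \ref{berthm}) does all the heavy lifting once confluence is checked — so the real work, and the real risk, is organizing and bounding the ambiguity computation so that it actually finishes and can be trusted; finding a monomial order for which the completion stays small (hence the per-row choice of $\LEX$ versus $\RLEX$ and of the order on $X_n$) is the one genuinely nontrivial modeling decision.
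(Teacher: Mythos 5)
Your overall strategy—orient the defining relations by $\geq$, run a Knuth--Bendix/Bergman completion, adjoin reduced ambiguity differences as new rules, and certify termination only through the actual computer run—is exactly the paper's procedure (1)--(4), and the paper likewise has no conceptual termination proof (cf. Remark \ref{conce}). But the two concrete anchors you give for the hard part are misplaced. First, the ideal $I_n$ in Proposition \ref{existsRS} lives in $\mathbb{Z}[X_n^{\ast}]$ and is generated by the \emph{group} relations of $G_n$, so the completion certifying this proposition is a completion of a presentation of $\mathbb{Z}G_n$ over $\mathbb{Z}$; the parameter relations $s^{d}=a_1s^{d-1}+\cdots+1$ you quote belong to the Hecke algebra $H_n$ over $\mathbb{Z}[a_1,\dots,a_\ell]$ and do not generate $I_n$. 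Second, the completed system is \emph{not} the $R_n$ of Definition \ref{finalRS}: $R_n$ is a reduction system for $H_n$, it is in general not compatible with the good order $\geq$, and the paper never verifies that all of its ambiguities resolve—the introduction explicitly says the author could not prove the corresponding normalization property. The advertised 1GB/5-day computation checks only Proposition \ref{actprop}, i.e.\ that $\RED(wx;R_n)$ halts for $w\in\AVO(T_n)$ and $x\in X_n$, which is far weaker than confluence. So the ``explicit finite completed system whose confluence the big computation verifies'' does not exist in the paper, and your termination/confluence argument, as written, points at a verification that was never performed; the paper instead runs the group-algebra completion separately (and, per Remarks \ref{conce} and \ref{notet}, ultimately leans only on the published $T_n$ and the count $|\AVO(T_n)|=|G_n|$).

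A smaller but genuine gap: when a reduced ambiguity difference $g$ is nonzero you adjoin $(\LT(g),\LT(g)-g)$ ``suitably normalized.'' Over $\mathbb{Z}$ (or over $\mathbb{Z}[a_1,\dots,a_\ell]$) a non-unit leading coefficient cannot be normalized away, and Bergman's framework requires rules whose left-hand side is a bare monomial; this is why the paper must record, as an observed fact of the runs, that the leading coefficient is always $\pm1$ so that $\PAIR(z)$ is defined. Your write-up should state this as a condition verified at every step rather than as a harmless rescaling. With these corrections—complete the group relations over $\mathbb{Z}$, check monicity of each new rule, and let the run itself witness termination with all ambiguities resolved—your argument coincides with the paper's.
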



Existence of $S_{n,\geq}$ is carried out by computer by the procedure (1)--(4). 

\begin{enumerate}
\item let $w_1=w'_1,\cdots,w_m=w'_m$ be defining relations of $G_n$ as in Table \ref{tab1},\ref{tab2},\ref{tab3}.
\item start with a RS $S_{n,\geq}:=(\PAIR(w_1-w_1'),\cdots,\PAIR(w_m-w_m'))$.
\item if there are ambiguities in $S_{n,\geq}$ (otherwise $S_{n,\geq}$ is a desired one, so return it), pick $(\sigma,\tau,A,B,C)$ where $\sigma=(W_{\sigma},f_{\sigma}),\tau=(W_{\tau},f_{\tau})$ from them and put $z=\RED(g;S_{n,\geq})$ where $g=f_{\sigma}C-Af_{\tau}$ or $g=f_{\sigma}-Af_{\tau}C$ depending on the ambiguity is overlap or inclusion.
\item if $z\ne 0$, add $\PAIR(z)$ to $S_{n,\geq}$ and repeat (3).
\end{enumerate}

Here for $0\ne g=\sum_{C}a_CC\in \cor[X_n^{\ast}]$ with $a_Z=\pm1$ where $Z=\max\{C\in X_n^{\ast}\mid a_C\ne 0\}$,
we define a reduction $\PAIR(g)=(Z,\mp(g\mp Z))$. 
If $g\in \cor[X_n^{\ast}]$ does not satisfy the assumption, we do not define $\PAIR(g)$.
Note that in (4) $\PAIR(z)$ is defined.

\begin{Rem}\label{conce}
Though I do not know a proof,
it is no wonder by virtue of $\FPZ{X_n}/I_n\cong \mathbb{Z}G_n$ that the termination of the procedure (1)--(4) above can be proved 
for 
\begin{enumerate}
\item[(I)] not only $\geq_{\LEX_{(X_n,\mygeq)}},\geq_{\RLEX_{(X_n,\mygeq)}}$ but also any good total order on $X_n^{\ast}$ 
\item[(J)] any way of picking ambiguities in (3)
\item[(K)] any way of sequencing of relation in (1)
\item[(L)] any way of (non-trivial) applications of maps $r_{A,B,s}$ in algorithm \ref{algred}
\end{enumerate}
\end{Rem}

Unless proving Remark \ref{conce} (if true), 
existence of $S_{n,\geq}$ through (1)--(4) 
is incomplete in the sense that there is no reproducibility.
Thus, to remain logically rigorous, we must specify (J),(K) in Remark \ref{conce} for a fixed $\geq$ on $X_n^{\ast}$.
But we need not to be nervous on the detail because we will rely only on $|\AVO(\LT(S_{n,\geq}))|=|G_n|$ (see also Remark \ref{notet}).
For a given finite subset $T\subseteq X_n^\ast$ we can (in principle) calculate $|\AVO(T)|$
by algorithm \ref{algenum} when $|\AVO(T)|<\infty$ since $\AVO(T)=\AVOID(T,\varepsilon)$.

\begin{algorithm}                      
\caption{Calculate $\AVOID(T,p)=\{q\in\MON{X}\mid \exists r\in\MON{X},pr=q\in\IRR(T)\}$ for $T\subseteq\MON{X}$ if it is a finite set}
\label{algenum}
\begin{algorithmic}
\Require{a finite set $X$, a finite subset $T\subseteq\MON{X}$ and a word $p\in\MON{X}$}
\If{$\{t\in T\mid\exists u\in\MON{X},p=ut\}\ne\emptyset$}
\State{return $\emptyset$}
\Else
\ForAll{$x\in X$}
\State{call recursively $A_{x}:=\AVOID(T,px)$}
\EndFor
\State{return $\{p\}\bigsqcup\bigsqcup_{x\in X}A_{x}$}
\EndIf
\end{algorithmic}
\end{algorithm}

\begin{Rem}
There is an algorithm that determine whether $|\IRR(T)|=\infty$ or not for a finite $X$ and a finite $T\subseteq\MON{X}$
based on cycle detection in a directed graph (e.g., topological sort).
Such a refinement of algorithm \ref{algenum} is not needed for our purpose.
\end{Rem}

\begin{Def}\label{condef}
For $4\leq n\leq 22$, let $X_n$ and the order $\geq$ on $X_n^{\ast}$ as in Table \ref{tab1},\ref{tab2},\ref{tab3}.
We define $T_n=\LT(S_n)$ where $S_n$ is the RS obtained from minimizing $S_{n,\geq}$.
\end{Def}

For a finite RS $S$, minimization means to
repeat deleting $s=(W,f)$ from $S$ until there exists $t=(W',f')\in S\setminus\{s\}$ with $W'|W$.
By Proposition \ref{existsRS},
$S_{n,\geq}$ exists that implies $S_n$ exists.
By Proposition \ref{minS}, all ambiguities of $S_n$ are resolvable
and $\LT(S_n)$ is well-defined by Proposition \ref{uniqS}. 

\begin{Rem}\label{notet}
Though $T_n$ plays a crucial role in the rest, we will not present a computer code for calculation of $T_n$ 
because we do not use the fact that $T_n=\LT(S_n)$. Thus, to remain logically rigorous, it is good to think that we are just given a finite $T_n\subseteq X_n^\ast$
(say by Ramanujan) to which computer can verify $|\AVO(T_n)|=|G_n|$.
\end{Rem}


Thanks to ~\cite[Proposition 2.3.(ii)]{Ma2}, Proposition \ref{crucialprop} implies Theorem \ref{maintheorem}.

\begin{Prop}\label{crucialprop}
For $4\leq n\leq 22$, put $\MA=\mathbb{Z}[a_1,\cdots,a_{\ell}]$. 
In $H_n$, we have 
\begin{align*}
\forall w\in \IRR(T_n),\forall x\in X_n,\exists(b_{C})_{C\in\AVO(T_n)}\in\MA^{\AVO(T_n)},wx=\sum_{C\in\AVO(T_n)}b_{C}C.
\end{align*}
\end{Prop}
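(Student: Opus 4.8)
The plan is to present $H_n$ as the quotient $\MA[X_n^{\ast}]/I_{R_n}$ of a free algebra by the ideal attached to a finite reduction system $R_n$ over $\MA=\mathbb{Z}[a_1,\cdots,a_{\ell}]$, and to read the coefficients $(b_C)$ off a normal form. Concretely, I would take $R_n$ to be the output of the completion procedure (1)--(4) from the proof of Proposition \ref{existsRS}, run now over $\MA$ and started from the defining relations of $H_n$ listed in Tables \ref{tab1}, \ref{tab2}, \ref{tab3}, each oriented with respect to the good order $\geq$ on $X_n^{\ast}$ recorded there so that its $\geq$-maximal monomial becomes the leading term $W$; this is Definition \ref{finalRS}. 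By construction every $(W,f)\in R_n$ satisfies $W\equiv f$ in $H_n$, so $I_{R_n}$ is the defining ideal of $H_n$ and $H_n=\MA[X_n^{\ast}]/I_{R_n}$; and every reduction added is of the form $\PAIR(z)$, whose leading monomial strictly dominates all others, so $R_n$ is compatible with $\geq$ and, by Remark \ref{redterm}, $\RED(g;R_n)$ is defined for every $g\in\MA[X_n^{\ast}]$.

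The only facts the computer must certify are that the completion halts, so that $R_n$ is finite, and that $T_n\subseteq\LT(R_n)$ (hence $\AVO(\LT(R_n))\subseteq\AVO(T_n)$). Granting these the argument is immediate: fix $w\in\AVO(T_n)$ and $x\in X_n$ and put $z:=\RED(wx;R_n)$ for the concatenation $wx\in X_n^{\ast}$. Then $z\in\MA[\AVO(\LT(R_n))]\subseteq\MA[\AVO(T_n)]$ by the definition of $\RED$, while $z=wx$ holds in $H_n=\MA[X_n^{\ast}]/I_{R_n}$ because $\RED$ only applies maps $r_{A,B,s}$ with $s\in R_n$, each of which alters an element by a member of $I_{R_n}$; writing $z=\sum_{C\in\AVO(T_n)}b_C C$ exhibits the required $(b_C)_{C\in\AVO(T_n)}\in\MA^{\AVO(T_n)}$. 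I note that halting of (1)--(4) means, by design, that every ambiguity of $R_n$ is resolvable, so that Theorem \ref{berthm} and its corollary in fact yield the stronger statement $H_n\cong\MA[\AVO(T_n)]$ of $\MA$-algebras once one also checks $\LT(R_n)=T_n$; combined with $|\AVO(T_n)|=|G_n|$ this would reprove Theorem \ref{maintheorem} directly, so in practice it is natural to carry the computation that far.

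The main obstacle is the computation itself. Over $\MA$, unlike over $\mathbb{Z}$, the coefficients encountered while resolving ambiguities live in a polynomial ring in up to $\ell=7$ indeterminates (the case $G_{19}$) and grow rapidly, which is the source of the $1$ GB / $5$ day cost announced in the abstract; one must also verify that $\PAIR(z)$ is defined at every pass through step (4), i.e.\ that each reduced ambiguity $z$ has leading coefficient $\pm 1$. A secondary, conceptual gap is that $T_n\subseteq\LT(R_n)$ is not established a priori: since the $a_i$-terms in the relations of $H_n$ are all strictly dominated, in $\geq$, by the leading term of the corresponding group relation, and the braid relations are untouched, the leading-term bookkeeping of the parametrized completion ought to mirror that of Proposition \ref{existsRS}; but, exactly as with Remark \ref{conce}, I would not attempt to prove this and would merely verify it on the computed $R_n$, which is all that is needed in the spirit of Remark \ref{notet}.
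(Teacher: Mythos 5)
Your overall skeleton (rewrite $wx$ by a finite reduction system whose rules are valid relations in $H_n$ with right-hand sides supported on $\IRR(T_n)$, observe that rewriting does not change the class in $H_n$, and read the coefficients $b_C$ off the output) is the same as the paper's. The gap is in how you obtain and certify the reduction system. You take $R_n$ to be the output of the completion procedure (1)--(4) run over $\MA$ and oriented by the good order $\geq$, assert that ``this is Definition \ref{finalRS}'', and conclude that $R_n$ is compatible with $\geq$, so that Remark \ref{redterm} makes $\RED(g;R_n)$ defined for every $g$. That is not the paper's $R_n$: in (P),(Q),(R) the left-hand side $Z$ of each new rule is chosen as the \emph{unique monomial of the reduced ambiguity lying outside $\IRR(T_n)$}, not as the $\geq$-maximal monomial, and the paper says explicitly (just before Proposition \ref{actprop}) that $\geq$ is in general \emph{not} compatible with $R_n$. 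Hence termination of $\RED(\,\cdot\,;R_n)$ is not automatic; it is exactly the content of Proposition \ref{actprop}, which the computer checks separately for each of the finitely many inputs $wx$ with $w\in\AVO(T_n)$, $x\in X_n$. Your appeal to Remark \ref{redterm} assumes away this central difficulty.

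Moreover, your route rests on substantially stronger computational claims than the paper establishes: that Knuth--Bendix/Buchberger completion over $\MA$ halts (so that all ambiguities of your $R_n$ are resolvable), that $\PAIR(z)$ is defined (leading coefficient $\pm1$) at every pass, and that $T_n\subseteq\LT(R_n)$. ``Certifying that the completion halts'' is not a bounded verification one can schedule in advance --- if it does not halt you learn nothing --- and confluence is precisely what the author states he could not prove (only strong normalization of $R_n$ is conjectured in the introduction). The paper's argument is built to avoid confluence and order-compatibility altogether: it uses only that every rule of $R_n$ is a relation valid in $H_n$ whose right-hand side lies in $\MA[\IRR(T_n)]$ (this is why $s^{(m)}_3$, resp.\ $s^{(m)}_4$, is dropped for $n=7,11,15$, resp.\ $22$), the finitely many verified steps (P),(Q),(R) prescribed by the sequences of \S\ref{dat}, and the computer-verified termination statement of Proposition \ref{actprop}; these already give Proposition \ref{crucialprop}, which is a spanning statement and needs no isomorphism $H_n\cong\MA[\AVO(T_n)]$ at this stage. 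So, as written, your proof is conditional on an unperformed (and possibly infeasible) completion over $\MA$, which is not what the paper's computation provides.
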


Recall $\ell$ is as in Table \ref{tab1},\ref{tab2},\ref{tab3}.
In the rest, we will explain how to verify Proposition \ref{crucialprop} by computer.
The origin of $T_n$ as $\LT(S_n)$ is a heuristic why Proposition \ref{crucialprop} holds.

\subsection{Proof of Proposition \ref{crucialprop}}\label{mainex}

\begin{Def}\label{mseq}
For $4\leq n\leq 22$, let $(x_1,y_1,z_1,u_1),\cdots(x_m,y_m,z_m,u_m)$ be the sequence presented in \S\ref{dat}.$n$ where $m$ is the title of \S\ref{dat}.$n$.
\end{Def}

The sequence is designed so that we can define $S^{(i)}_n=(s^{(i)}_0,s^{(i)}_1,\cdots,s^{(i)}_{N^{(i)}})$ inductively by (P),(Q),(R)
staring with the RS $S^{(0)}_n$ of $H_n$ ordered as in Table \ref{tab1},\ref{tab2},\ref{tab3}.
\begin{enumerate}
\item[(P)] For $i=1,\cdots,m$, $s^{(i-1)}_{x_i}$ and $s^{(i-1)}_{y_i}$ have an overlap (resp. inclusion) ambiguity
when $u_i=0$ (resp. $u_i=1$). More precisely, write $s^{(i-1)}_{x_i}=(W,f)$ and $s^{(i-1)}_{y_i}=(W',f')$ where $W=w_1\cdots w_p$ and $W'=w'_1\cdots w'_q$.
Then, 
\begin{itemize}
\item $w_{p-z_i+1}\cdots w_p=w'_1\cdots w'_{z_i}$ if $u_i=0$ (put $h=fw'_{z_i+1}\cdots w'_{q}-w_1\cdots w_{p-z_i}f'$)
\item $W=w'_{z_i+1}\cdots w'_{z_i+p}$ if $u_i=1$ (put $h=w'_1\cdots w'_{z_i}fw'_{z_i+p+1}\cdots w'_{q}-f'$)
\end{itemize}
\item[(Q)] $g:=\RED(h;S^{(i-1)}_n)$ is well-defined and $g\ne 0$. Moreover, we have $\{\exists!Z\}=\{C\in X_n^{\ast}\mid C\not\in\IRR(T_n)\}$ and $a_Z=\pm1$
where $g=\sum_{C\in X_n^{\ast}}a_CC$.
\item[(R)] Let $S^{(i)}_n$ be $S^{(i-1)}_n$ added with a reduction $s=(Z,\mp(g\mp Z))$
\end{enumerate}


\begin{Rem}\label{magicr}
We found the sequence based on a heuristic to establish (Q),
but we will not explain the detail since it is not conceptually understandable and not needed (see also \S\ref{spup}).
Again, to remain mathematically rigorous, it is good to think 
it is just a Ramanujan-given sequence to which computer can verify (P),(Q).
\end{Rem}

\begin{Def}\label{finalRS}
Let $R_n=S^{(m)}_n$ for $n\ne 7,11,15,22$.
When $n=7,11,15$ (resp. $22$), let $R_n=(s^{(m)}_0,s^{(m)}_1,s^{(m)}_2,s^{(m)}_4,\cdots,s^{(m)}_{N^{(m)}})$ 
(resp. $(s^{(m)}_0,s^{(m)}_1,s^{(m)}_2,s^{(m)}_3,s^{(m)}_5,\cdots,s^{(m)}_{N^{(m)}})$).
\end{Def}

The presentation so far uniquely define $R_n$ if we believe the phrases ``computer can verify'' in Remark \ref{notet} and Remark \ref{magicr} (and we will just do so from now on).
Modification of $S^{(m)}_{n}$ to define $R_n$ in Definition \ref{finalRS} just comes from the following fact on ``initial values'' $s^{(0)}_i=(W_i,f_i)$ where $f_i=\sum_{C}a^{(i)}_C C$.
\begin{itemize}
\item when $n\ne 7,11,15,22$, for all $1\leq i\leq N^{(0)}$ we have $C\in\AVO(T_n)$ if $a^{(i)}_C\ne 0$
\item when $n=7,11,15$, for all $1\leq i\leq N^{(0)},i\ne 3$ we have $C\in\AVO(T_n)$ if $a^{(i)}_C\ne 0$
\item when $n=22$, for all $1\leq i\leq N^{(0)},i\ne 4$ we have $C\in\AVO(T_n)$ if $a^{(i)}_C\ne 0$
\end{itemize}

Since $\forall s=(W,f)\in R_n,f\in\cor[\IRR(T_n)]$ by design, Proposition \ref{actprop} implies Proposition \ref{crucialprop}.
Recall ``well-defined'' in Proposition \ref{actprop} (resp. (Q)) means that even 
though the good order $\geq$ (see Proposition \ref{existsRS}) is not compatible with $R_n$ (resp. $S^{(i-1)}_n$) in general, loop in
algorithm \ref{algred} halts after finitely many steps.

\begin{Prop}\label{actprop}
For $w\in \AVO(T_n)$ and for $x\in X_n$, $\RED(wx;R_n)$ is well-defined.
\end{Prop}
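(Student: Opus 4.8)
The plan is to establish Proposition~\ref{actprop} by a finite computer verification, in the same spirit as the conditions (P),(Q) on which the definition of $R_n$ already rests. Since $\AVO(T_n)$ is a finite set --- indeed $|\AVO(T_n)|=|G_n|$, and it is explicitly enumerable by Algorithm~\ref{algenum} --- and $X_n$ is finite, there are only finitely many pairs $(w,x)$ with $w\in\AVO(T_n)$ and $x\in X_n$. For each such pair I would run the loop of Algorithm~\ref{algred} on the input $(wx,R_n,\geq)$ and record that it reaches \verb|return g| after finitely many passes. Because termination of a deterministic procedure on a fixed input is itself a finite, conclusive check, this proves that $\RED(wx;R_n)$ is defined for every relevant $(w,x)$, which is exactly the assertion.

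To organise the computation, and to see why one expects each run to halt, I would first pin down the very first reduction. By construction every $(W,f)\in R_n$ has all monomials of $f$ in $\AVO(T_n)$; moreover every element of $\LT(R_n)$ contains some element of $T_n$ as a subword, and $T_n\subseteq\LT(R_n)$ --- this last fact by comparing $R_n$, under the specialisation $\MA\to\mathbb{Z}$ realising $\FPZ{X_n}/I_n\cong\mathbb{Z}G_n$, with the minimal system $S_n$ of Definition~\ref{condef} via Propositions~\ref{minS} and~\ref{uniqS} --- so $\AVO(\LT(R_n))=\AVO(T_n)$. Now if $w\in\AVO(T_n)$, then any $D\in\LT(R_n)$ dividing $wx$ contains a $t\in T_n$, and since $w\in\AVO(T_n)$ the occurrence of $t$ in $wx$ must end at the final letter $x$, hence so must that of $D$: thus $D$ is a suffix of $wx$. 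As no left-hand side of a reduction in $R_n$ has length one, $wx=AD$ with $A$ a proper prefix of $w$, and the first pass rewrites $wx$ to $Af$ for some $(D,f)\in R_n$, every monomial of which lies in $A\cdot\AVO(T_n)$. The sequence of Definition~\ref{mseq} is designed so that the ensuing rewriting, which keeps consuming letters from the right, remains finite; this is the heuristic behind Proposition~\ref{actprop}, though I have no a~priori argument.

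This last point is the main obstacle. The good order $\geq$ is not compatible with $R_n$: the reductions added in step~(R) are oriented toward the unique monomial $Z$ of the computed normal form that lies outside $\AVO(T_n)$, and $Z$ need not be $\geq$-maximal among the monomials of that normal form. Consequently neither Remark~\ref{redterm} nor the Artinian-descent argument behind Theorem~\ref{berthm} applies, and --- as flagged in Remark~\ref{conce} --- termination of such rewriting is open in general; for that reason I would not attempt a conceptual proof and would rely on the explicit finite verification above. Granting Proposition~\ref{actprop}, Proposition~\ref{crucialprop} follows immediately: each reduction in $R_n$ is a relation of $H_n$ and has $f\in\cor[\AVO(T_n)]$, so $\RED(wx;R_n)\in\cor[\AVO(\LT(R_n))]=\cor[\AVO(T_n)]$ and equals $wx$ in $H_n$, which is the asserted expansion $wx=\sum_{C\in\AVO(T_n)}b_{C}C$ with $b_{C}\in\MA$.
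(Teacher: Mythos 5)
Your proposal is correct and is essentially the paper's own proof: Proposition \ref{actprop} is established by exactly this finite brute-force verification, running Algorithm \ref{algred} on $wx$ with $R_n$ for each of the finitely many pairs $(w,x)$ with $w\in\AVO(T_n)$, $x\in X_n$ and observing that each run halts (this is what \texttt{test\_bmr.cpp} does), noting as you do that no general termination argument applies because $\geq$ is not compatible with $R_n$. Your middle paragraph (in particular the claim $T_n\subseteq\LT(R_n)$ argued via Proposition \ref{uniqS}, which would require resolvability of the ambiguities of $R_n$) is only heuristic framing and is not needed for the verification, so its looseness does not affect the proof.
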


\section{Computer calculation}
We verify Proposition \ref{actprop} by computer. 
To verify the case $n$, press 
\begin{quotation}
\verb|./test_bmr |$n$\verb| t|$n$\verb| s|$n$
\end{quotation}
We provide the computation log for $n$ by the name \verb|log|$n$.
All necessary ingredients are packed into 
\verb|http://www.kurims.kyoto-u.ac.jp/~tshun/BMR.tar.gz|.

\subsection{$T_n$}
As in Remark \ref{notet}, we just provide $T_n$ computed elsewhere by the text file \verb|t|$n$. 
For example, in a unix shell a command \verb|cat t4| provides
{\tiny{
\begin{verbatim}
6
3 0 0 0
3 1 1 1
3 1 0 1
5 0 1 0 0 1
5 1 0 0 1 1
5 1 1 0 0 1
\end{verbatim}
}}
\noindent{which means $T_4=\{sss,ttt,tst,stsst,tsstt,ttsst\}$.}
Note that in principle $T_n$ is computable by your own based on Definition \ref{condef}.
$|T_n|$ is given as follows.
\begin{center}
\begin{tabular}{c|ccccccccccccccccccc}
$n$     & 4 & 5  & 6 & 7  & 8  & 9  & 10 & 11  & 12 & 13 & 14 & 15  & 16 & 17 & 18  & 19  & 20 & 21 & 22 \\ \hline
$|T_n|$ & 6 & 13 & 6 & 62 & 14 & 13 & 43 & 185 & 24 & 33 & 11 & 106 & 44 & 49 & 138 & 551 & 36 & 30 & 66
\end{tabular}
\end{center}

\subsection{The sequences}
We also provide the sequence in Definition \ref{mseq} by the text file \verb|seq|$n$.
As in Remark \ref{magicr}, it plays a crucial role, but there is no easy way to understand and duplicate them.
The reason why we write down all explicitly in \S\ref{dat} is to ensure the self-containedness of the paper meaning that
in principle explanations of this paper with data in \S\ref{dat} can reproduce a computer proof of Proposition \ref{actprop}.

\subsection{C++ codes}
The list of codes and the synopsis is as follows.
\begin{enumerate}
\item \verb|test_bmr.cpp| that checks $|\AVO(T_n)|=|G_n|$, execute (P),(Q),(R) in \S\ref{mainex} and verifies Proposition \ref{actprop}
\item \verb|smartpointer.hpp| is an implementation of a smart pointer of shared type with a reference count (see ~\cite[\S14.2]{AK})
\item \verb|simple_polynomial.hpp| is an implementation of $\MA=\mathbb{Z}[a_1,\cdots,a_{\ell}]$ using \verb|tinycm.hpp|, \verb|smartpointer.hpp| and STL map container.
\item \verb|tinyncm.hpp| is an implementation of $\{s,t,u\}^{\ast}$ (in real, $0=s,1=t,2=u$)
\item \verb|tinycm.hpp| is an implementation of commutative monomials of $\mathbb{Z}[a_1,\cdots,a_{\ell}]$
\item \verb|crg.hpp| provides data for defining complex reflection groups of rank 2
\item \verb|magicnum.hpp| provides magic numbers for overflow detection (see \S\ref{overbf})
\end{enumerate}

While each code is short and based on just C++98 standard,
knowing the following correspondences in \verb|test_bmr.cpp| may improve readability.
\begin{itemize}
\item algorithm \ref{algred} is implemented as the sentence \verb|while ( is_reduceable2()>=0 ) {}| 
\item algorithm \ref{algenum} is implemented as the function \verb|quotient_dimension3| 
\end{itemize}

\subsection{Complication and overflow detection}\label{overbf}
To build a binary, you just press
\begin{quotation}
\verb|g++ test_bmr.cpp -o test_bmr -DBMR_OVERFLOW -O3|
\end{quotation}

The compilation option \verb|-DBMR_OVERFLOW| forces the followings check in a run.
\begin{itemize}
\item never involves NC monomial of length more than or equal to 28
\item never involves NC polynomial of length more than or equal to 200000
\item $|c|<100000$ for any coefficient $c$ of $\MA$ occurring in a computation
\item $\forall i,m_i<100$ for any monomial $a_1^{m_1}\cdots a_{\ell}^{m_{\ell}}$ of $\MA$ occurring in a computation
\item a reference count in an instance of \verb|smartpointer.hpp| never exceeds 100000
\end{itemize}

If you believe a pathological overflow never happens (and it really does not occur), you may remove \verb|-DBMR_OVERFLOW| in compilation.

\subsection{An example} When $n=4$, \verb|./test_bmr 4 t4 s4| displays
{\tiny{
\begin{verbatim}
dim=24
0th rule : <000>|-->(|<00>,[1*[1,0,]]|,|<0>,[1*[0,1,]]|,|<>,[1*[0,0,]]|)
1th rule : <111>|-->(|<11>,[1*[1,0,]]|,|<1>,[1*[0,1,]]|,|<>,[1*[0,0,]]|)
2th rule : <101>|-->(|<010>,[1*[0,0,]]|)
i=0
i=1
i=2
i=3
i=4
0th rule : <000>|-->(|<00>,[1*[1,0,]]|,|<0>,[1*[0,1,]]|,|<>,[1*[0,0,]]|)
1th rule : <111>|-->(|<11>,[1*[1,0,]]|,|<1>,[1*[0,1,]]|,|<>,[1*[0,0,]]|)
2th rule : <101>|-->(|<010>,[1*[0,0,]]|)
3th rule : <01001>|-->(|<10010>,[1*[0,0,]]|)
4th rule : <110010>|-->(|<10010>,[1*[1,0,]]|,|<0010>,[1*[0,1,]]|,|<011>,[1*[0,0,]]|)
5th rule : <10011>|-->(|<00110>,[1*[0,0,]]|,|<1001>,[1*[1,0,]]|,|<0110>,[-1*[1,0,]]|,|<110>,[-1*[0,1,]]|,|<100>,[1*[0,1,]]|)
6th rule : <11001>|-->(|<01100>,[1*[0,0,]]|,|<1001>,[1*[1,0,]]|,|<0110>,[-1*[1,0,]]|,|<011>,[-1*[0,1,]]|,|<001>,[1*[0,1,]]|)
0-th basis=().0 results in : (<0>,[1*[0,0,]])
0-th basis=().1 results in : (<1>,[1*[0,0,]])
1-th basis=(0).0 results in : (<00>,[1*[0,0,]])
1-th basis=(0).1 results in : (<01>,[1*[0,0,]])
2-th basis=(0,0).0 results in : (<00>,[1*[1,0,]])(<0>,[1*[0,1,]])(<>,[1*[0,0,]])
2-th basis=(0,0).1 results in : (<001>,[1*[0,0,]])
[SNIP]
21-th basis=(1,1).0 results in : (<110>,[1*[0,0,]])
21-th basis=(1,1).1 results in : (<11>,[1*[1,0,]])(<1>,[1*[0,1,]])(<>,[1*[0,0,]])
22-th basis=(1,1,0).0 results in : (<1100>,[1*[0,0,]])
22-th basis=(1,1,0).1 results in : (<0100>,[1*[0,0,]])
23-th basis=(1,1,0,0).0 results in : (<1100>,[1*[1,0,]])(<110>,[1*[0,1,]])(<11>,[1*[0,0,]])
23-th basis=(1,1,0,0).1 results in : (<01100>,[1*[0,0,]])(<1001>,[1*[1,0,]])(<0110>,[-1*[1,0,]])(<011>,[-1*[0,1,]])(<001>,[1*[0,1,]])
\end{verbatim}
}}
\label{outputbmr4}

We snipped the place of \verb|[SNIP]| to save space. This output can be read:
\begin{enumerate}
\item
$|\AVO(T_4)|=24$ and moreover computer enumerates as
\begin{align*}
\AVO(T_4) = \{1,s,ss,\cdots,tt,tts,ttss\}
\end{align*}
\item Starting from the defining relation (a),(b),(c),
new reductions (d),(e),(f),(g) are added according to the guidance of \verb|s4| (see also (P),(Q),(R) in \S\ref{mainex}).
\begin{align*}
\begin{array}{clc|ccl}
(a) & (sss,a_1ss+a_2s+1) & & & (d) & (stsst,tssts) \\
(b) & (ttt,a_1tt+a_2t+1) & & & (e) & (ttssts,a_1tssts+a_2ssts+stt) \\
(c) & (tst,sts)         & & & (f) & (tsstt,sstts+a_1tsst-a_1stts-a_2tts+a_2tss) \\
{} & {}                 & & & (g) & (ttsst,sttss+a_1tsst-a_1stts-a_2stt+a_2sst) 
\end{array}
\end{align*}
\item Proposition \ref{actprop} is verifies as follows (note it is snipped to save space).
\begin{align*}
\begin{array}{lc|cl}
1\cdot s = s             & {} & {} & tt\cdot s = tts                \\
1\cdot t = t             & {} & {} & tt\cdot t = a_1tt+a_2t+1       \\
s\cdot s = ss            & {} & {} & tts\cdot s = ttss              \\
s\cdot t = st            & {} & {} & tts\cdot t = stss              \\
ss\cdot s = a_1ss+a_2s+1 & {} & {} & ttss\cdot s = a_1ttss+a_2tts+tt \\
ss\cdot t = sst          & {} & {} & ttss\cdot t = sttss + a_1tsst - a_1stts - a_2stt + a_2sst
\end{array}
\end{align*}
\end{enumerate}

\subsection{On an implementation of $X_n^{\ast}$}\label{spup}
We first implement an element of $\{s,t,u\}^{\ast}$ as a 64bit unsigned integer unlike \verb|tinyncm.hpp|.
This ``\verb|bitncm.hpp|'' version is much faster and plays a role in finding the sequence in \S\ref{dat}
and convince us that the strategy in the paper operates well to verify the BMR conjecture.
But we do not provide \verb|bitncm.hpp| concerning portability and readability. 

\subsection{On an implementation of $\MA$}
It seems that an implementation of an element of $\MA$ via \verb|smartpointer.hpp|
(i.e.,adapting shallow copy) is 
a key technique stuff to accomplish the computation.
A heuristic reason
why a deep copy of an element of $\MA$
makes our computation unbearably slow
is that
we call \verb|is_reduceable2| (an application of a reduction) many (e.g. 49494726 when $n=19$) time and
in each call many parts of a NC polynomial to which a reduction applies are just copied.

\subsection{Resources}
On a server with Xeon E5-2643v3 (3.4GHz) and DDR4-2133, 
the run for $n=17,18,19$ took less than $1,8,105$ hours respectively (in a flash for other $n$) and 1GB memory is enough.
By design \verb|test_bmr.cpp| requires a single core. 

While there is a room for further speed up as in \S\ref{spup},
we judge that the elapsed time is acceptable for the purpose of verifying the BMR freeness and stop here.

\section{Appendix : The sequences}\label{dat}
\setcounter{subsection}{3}
\setlength{\columnseprule}{0.1pt}
{\tiny{
\begin{multicols}{8}
\subsection{4}
\begin{verbatim}
2 2 1 0
2 3 2 0
2 4 1 0
4 0 1 0
\end{verbatim}

\subsection{14}
\begin{verbatim}
2 2 2 0
2 3 3 0
1 3 1 0
4 0 1 0
5 2 3 0
0 7 1 0
7 6 4 0
3 4 3 0
6 1 1 0
1 11 1 0
5 0 1 0
13 10 6 0
9 7 5 0
8 0 1 0
\end{verbatim}

\subsection{6}
\begin{verbatim}
2 2 4 0
1 3 1 0
0 4 1 0
5 0 1 0
6 1 1 0
1 7 1 0
\end{verbatim}

\subsection{70}
\begin{verbatim}
3 4 2 0
4 0 1 0
4 3 1 0
4 6 2 0
5 3 2 0
1 6 1 0
3 6 1 0
11 7 1 1
3 10 1 0
4 5 1 0
3 14 2 0
4 7 2 0
5 2 1 0
2 17 1 0
5 4 1 0
10 1 1 0
0 20 1 0
10 17 2 0
11 17 2 0
14 14 2 0
4 24 2 0
17 4 1 0
18 0 1 0
27 21 3 0
8 27 3 0
19 1 1 0
21 9 3 0
22 0 1 0
22 24 4 0
0 31 1 0
0 32 1 0
21 35 3 0
0 36 1 0
1 8 1 0
1 24 1 0
2 34 1 0
3 22 1 0
4 15 1 0
5 17 1 0
8 18 3 0
11 44 3 0
9 8 3 0
3 33 2 0
10 34 2 0
0 48 1 0
14 34 2 0
35 50 3 0
50 35 3 0
10 43 2 0
16 26 5 0
17 44 2 0
18 50 2 0
27 51 3 0
38 17 2 0
45 49 4 0
46 17 2 0
54 0 1 0
61 50 3 0
49 61 3 0
1 62 1 0
3 45 1 0
3 62 1 0
8 25 3 0
10 67 3 0
10 66 2 0
0 69 1 0
29 53 4 0
11 71 3 0
50 22 2 0
34 64 4 0
\end{verbatim}

\subsection{17}
\begin{verbatim}
2 2 1 0
2 3 2 0
2 4 2 0
5 2 2 0
2 5 1 0
2 6 1 0
5 0 1 0
2 9 1 0
1 8 1 0
8 9 3 0
3 7 3 0
9 8 4 0
3 13 2 0
2 11 1 0
4 9 1 0
7 0 2 0
10 2 2 0
\end{verbatim}

\subsection{21}
\begin{verbatim}
2 2 4 0
1 3 1 0
2 3 5 0
0 4 1 0
1 5 1 0
0 7 1 0
6 0 1 0
9 1 1 0
1 10 1 0
0 11 1 0
6 7 5 0
5 9 4 0
8 0 1 0
15 1 1 0
1 16 1 0
16 15 8 0
13 0 2 0
19 1 1 0
1 20 1 0
8 7 5 0
10 9 4 0
\end{verbatim}

\subsection{48}
\begin{verbatim}
2 2 2 0
2 3 3 0
1 3 1 0
2 4 3 0
1 4 1 0
5 2 3 0
7 2 3 0
2 5 1 0
3 3 1 0
0 8 1 0
5 0 1 0
6 0 1 0
9 14 5 0
14 1 1 0
1 15 2 0
11 13 6 0
15 8 5 0
1 16 1 0
8 18 4 0
16 14 5 0
9 22 5 0
1 23 2 0
18 1 1 0
23 9 6 0
1 21 2 0
7 0 1 0
4 11 4 0
4 12 4 0
0 9 1 0
6 4 4 0
28 30 7 0
30 28 7 0
14 19 5 0
16 22 5 0
29 13 7 0
21 8 5 0
30 8 3 0
34 1 1 0
2 27 3 0
41 8 5 0
8 41 4 0
1 42 1 0
1 43 2 0
19 15 6 0
1 40 1 0
1 46 2 0
39 28 7 0
40 34 8 0
\end{verbatim}

\subsection{217}
\begin{verbatim}
3 4 2 0
4 0 1 0
4 3 1 0
4 6 2 0
5 3 2 0
3 6 1 0
10 7 1 1
4 5 1 0
3 12 2 0
4 7 2 0
5 2 1 0
2 15 1 0
5 4 1 0
9 3 2 0
10 15 2 0
13 12 3 0
15 4 1 0
16 0 1 0
8 22 3 0
1 6 1 0
3 24 1 0
5 15 1 0
8 16 3 0
10 27 3 0
7 27 3 0
9 4 1 0
9 8 3 0
10 26 2 0
14 3 2 0
14 21 5 0
15 12 1 0
15 27 2 0
17 1 1 0
23 22 3 0
24 15 2 0
26 4 1 0
12 12 2 0
3 41 2 0
31 15 2 0
31 27 3 0
34 0 1 0
6 45 2 0
39 0 1 0
0 47 1 0
47 22 3 0
1 8 1 0
2 19 1 0
3 39 1 0
4 29 2 0
5 26 1 0
10 19 2 0
10 28 2 0
15 26 1 0
3 57 2 0
18 8 3 0
24 1 1 0
0 60 1 0
22 61 3 0
61 18 4 0
61 48 4 0
0 63 1 0
65 57 0 1
0 64 1 0
24 26 2 0
25 3 2 0
25 21 5 0
26 27 2 0
30 1 1 0
31 26 2 0
33 3 2 0
74 18 4 0
74 48 4 0
2 75 1 0
45 74 3 0
4 78 2 0
8 75 3 0
8 76 3 0
11 45 2 0
15 58 2 0
15 75 2 0
27 37 4 0
33 40 6 0
2 65 1 0
34 77 4 0
39 77 4 0
3 89 2 0
40 7 4 0
0 91 1 0
48 74 3 0
50 15 2 0
52 10 3 0
53 84 5 0
59 15 2 0
67 22 3 0
67 79 6 0
31 58 3 0
86 0 1 0
88 52 6 0
36 31 4 0
89 56 0 1
91 77 5 0
75 105 5 0
101 74 3 0
2 99 1 0
3 66 1 0
3 68 1 0
10 32 2 0
0 111 1 0
112 0 1 0
76 113 5 0
112 101 5 0
0 115 1 0
10 43 2 0
10 44 2 0
10 87 3 0
10 102 2 0
12 39 1 0
17 39 1 0
17 43 2 0
17 44 2 0
23 75 3 0
23 76 3 0
24 54 2 0
22 93 3 0
24 80 3 0
26 58 2 0
26 129 4 0
27 77 2 0
2 89 1 0
31 19 2 0
31 28 2 0
38 127 5 0
10 136 3 0
76 112 5 0
31 136 3 0
45 96 3 0
47 75 3 0
27 141 4 0
47 76 3 0
50 26 2 0
58 72 5 0
59 26 2 0
66 113 4 0
68 96 5 0
3 148 2 0
69 40 6 0
71 31 4 0
0 151 1 0
77 120 5 0
87 97 7 0
96 30 3 0
10 155 3 0
96 151 6 0
98 79 6 0
104 105 6 0
115 113 5 0
129 19 5 0
141 124 0 1
148 118 0 1
151 96 6 0
76 164 5 0
154 0 1 0
2 148 1 0
3 100 1 0
3 104 1 0
3 159 2 0
10 73 2 0
0 171 1 0
172 0 1 0
172 101 5 0
0 174 1 0
15 155 2 0
17 66 1 0
17 71 2 0
17 94 1 0
17 97 2 0
24 43 2 0
24 125 3 0
27 96 2 0
27 147 4 0
30 43 2 0
31 43 2 0
31 44 2 0
49 76 3 0
100 173 5 0
114 75 3 0
114 76 3 0
129 89 5 0
131 45 3 0
141 45 3 0
174 173 6 0
182 43 6 0
3 162 1 0
10 146 2 0
10 156 2 0
10 167 3 0
17 100 1 0
24 73 2 0
24 97 2 0
24 142 3 0
30 73 2 0
30 97 2 0
66 164 4 0
68 140 5 0
76 171 4 0
10 209 3 0
165 0 1 0
31 209 3 0
115 164 5 0
151 140 6 0
167 186 9 0
175 149 8 0
3 184 1 0
10 177 2 0
10 186 2 0
215 0 1 0
24 183 3 0
\end{verbatim}

\subsection{26}
\begin{verbatim}
2 4 1 0
3 0 1 0
1 6 1 0
7 2 1 0
2 8 1 0
0 9 1 0
3 4 2 0
1 11 1 0
5 1 1 0
13 0 1 0
13 11 3 0
0 14 1 0
16 2 1 0
0 15 1 0
1 17 1 0
7 14 3 0
7 15 3 0
9 12 3 0
9 16 2 0
11 16 2 0
12 1 1 0
9 25 3 0
2 20 1 0
2 22 1 0
11 13 1 0
28 29 4 0
\end{verbatim}

\subsection{43}
\begin{verbatim}
2 4 1 0
4 3 0 1
1 6 1 0
5 1 1 0
8 0 1 0
0 9 1 0
10 2 1 0
1 11 1 0
3 4 2 0
0 13 1 0
7 1 1 0
15 0 1 0
2 16 1 0
17 2 1 0
0 18 1 0
1 19 1 0
7 4 3 0
15 13 4 0
9 17 3 0
10 11 2 0
12 12 1 0
1 25 1 0
8 26 3 0
11 25 3 0
14 1 1 0
19 25 3 0
21 25 3 0
22 15 4 0
14 21 3 0
3 29 1 0
29 31 4 0
1 35 1 0
10 36 3 0
1 37 1 0
10 17 1 0
1 39 1 0
13 14 2 0
0 41 1 0
16 42 5 0
43 2 1 0
1 44 1 0
8 10 1 0
0 45 1 0
\end{verbatim}

\subsection{15}
\begin{verbatim}
2 2 6 0
1 3 1 0
0 4 1 0
5 0 1 0
6 1 1 0
1 7 1 0
0 8 1 0
5 4 7 0
9 0 1 0
11 1 1 0
1 12 1 0
7 6 6 0
10 0 1 0
15 1 1 0
1 16 1 0
\end{verbatim}

\subsection{134}
\begin{verbatim}
4 2 1 0
4 3 0 1
0 3 1 0
1 5 1 0
2 8 1 0
3 6 4 0
9 1 1 0
4 11 1 0
6 4 1 0
7 1 1 0
14 0 1 0
1 15 1 0
7 4 2 0
8 8 1 0
2 10 1 0
3 6 2 0
4 19 1 0
8 17 3 0
14 5 3 0
16 2 1 0
19 22 5 0
1 25 1 0
4 25 2 0
13 1 1 0
15 4 1 0
29 22 5 0
1 30 1 0
17 1 1 0
19 15 4 0
1 33 1 0
4 33 2 0
22 30 5 0
25 16 3 0
26 1 1 0
38 0 1 0
26 4 2 0
2 40 1 0
4 40 1 0
1 42 1 0
20 15 4 0
27 1 1 0
45 0 1 0
28 5 3 0
29 15 4 0
31 1 1 0
32 5 3 0
43 1 1 0
49 34 6 0
1 52 1 0
1 48 1 0
1 50 1 0
38 8 2 0
2 23 1 0
1 57 1 0
4 57 2 0
4 34 2 0
58 1 1 0
30 61 5 0
59 1 1 0
63 0 1 0
1 64 1 0
65 8 3 0
61 8 2 0
2 55 1 0
0 68 1 0
1 69 1 0
45 8 2 0
63 8 2 0
33 72 6 0
4 70 2 0
8 40 1 0
15 16 2 0
15 26 4 0
25 14 2 0
25 65 3 0
1 71 1 0
27 36 5 0
30 58 5 0
40 32 3 0
55 83 6 0
66 45 5 0
79 71 1 1
39 45 4 0
40 43 4 0
2 88 1 0
89 88 1 1
4 83 2 0
39 85 5 0
46 92 6 0
1 46 1 0
41 1 1 0
4 95 1 0
83 96 6 0
96 49 6 0
2 72 1 0
16 49 3 0
53 0 1 0
4 96 1 0
4 98 1 0
72 31 5 0
77 16 3 0
99 1 1 0
0 81 1 0
2 56 1 0
2 105 1 0
72 52 5 0
4 99 1 0
16 22 1 0
28 112 5 0
4 38 2 0
61 0 1 0
2 113 1 0
4 58 2 0
117 8 3 0
89 11 2 0
96 8 3 0
98 0 1 0
120 45 5 0
0 121 1 0
123 18 5 0
4 114 1 0
26 36 5 0
35 84 6 0
55 96 5 0
1 128 1 0
62 29 5 0
106 18 4 0
125 67 7 0
0 104 1 0
2 102 2 0
4 91 1 0
4 135 1 0
15 114 3 0
72 14 2 0
\end{verbatim}

\subsection{57}
\begin{verbatim}
2 2 1 0
3 2 2 0
4 2 2 0
5 2 2 0
2 6 2 0
2 7 2 0
5 3 2 0
3 9 4 0
0 6 1 0
11 2 1 0
2 12 2 0
12 4 4 0
13 3 4 0
2 15 2 0
3 10 4 0
11 17 4 0
18 5 5 0
0 7 1 0
20 2 1 0
2 21 2 0
20 17 4 0
22 3 4 0
0 9 2 0
25 4 4 0
2 16 2 0
11 27 4 0
27 20 5 0
20 27 4 0
27 1 1 0
4 14 4 0
13 11 4 0
0 8 1 0
0 10 2 0
0 14 2 0
2 26 2 0
2 30 1 0
3 19 3 0
17 11 4 0
35 39 6 0
2 19 2 0
40 23 8 0
22 11 4 0
0 19 1 0
16 13 5 0
0 44 1 0
2 44 2 0
19 27 1 0
32 25 6 0
39 18 7 0
25 50 5 0
39 41 7 0
3 52 2 0
54 45 9 0
45 54 8 0
18 51 6 0
22 35 4 0
2 57 1 0
\end{verbatim}

\subsection{92}
\begin{verbatim}
2 2 4 0
3 2 5 0
4 2 5 0
3 1 1 0
4 1 1 0
6 0 1 0
5 1 1 0
9 0 1 0
7 0 1 0
0 8 1 0
7 11 5 0
7 12 4 0
1 12 1 0
15 1 1 0
16 0 1 0
12 5 5 0
0 11 1 0
1 19 1 0
9 12 4 0
20 1 1 0
22 0 1 0
6 15 2 0
7 10 5 0
12 15 5 0
0 10 1 0
1 27 1 0
28 1 1 0
27 28 10 0
6 16 2 0
19 5 5 0
0 14 2 0
1 33 1 0
30 28 10 0
34 1 1 0
9 11 5 0
7 22 2 0
36 0 1 0
12 20 5 0
9 10 5 0
9 18 5 0
0 32 1 0
1 43 1 0
44 1 1 0
19 15 5 0
6 20 2 0
45 0 1 0
7 20 2 0
19 20 5 0
33 4 4 0
9 21 6 0
46 20 9 0
0 51 1 0
1 54 1 0
55 1 1 0
16 17 6 0
26 5 5 0
0 24 1 0
50 20 9 0
0 52 2 0
1 61 1 0
62 1 1 0
53 5 5 0
47 5 5 0
7 13 5 0
6 44 2 0
36 46 8 0
9 25 5 0
31 15 6 0
63 0 1 0
26 15 5 0
47 15 5 0
7 49 5 0
0 65 1 0
0 49 1 0
67 4 4 0
0 64 1 0
1 78 1 0
79 1 1 0
16 48 6 0
16 50 5 0
38 15 6 0
9 75 4 0
3 58 2 0
53 15 5 0
9 78 4 0
0 77 1 0
0 81 2 0
16 68 6 0
74 5 5 0
0 84 2 0
72 44 8 0
0 90 2 0
\end{verbatim}

\subsection{164}
\begin{verbatim}
2 2 2 0
2 3 3 0
4 2 3 0
4 1 1 0
3 6 3 0
5 2 3 0
0 3 1 0
5 1 1 0
3 10 3 0
7 1 1 0
7 2 3 0
8 1 1 0
9 2 3 0
3 8 3 0
6 0 1 0
11 2 3 0
12 16 6 0
9 1 1 0
11 1 1 0
15 2 3 0
0 16 1 0
15 1 1 0
17 8 6 0
10 0 1 0
21 16 6 0
6 25 3 0
19 13 8 0
22 2 3 0
1 30 1 0
30 14 6 0
20 16 6 0
1 33 1 0
30 31 6 0
1 35 1 0
35 14 6 0
31 1 1 0
33 10 5 0
24 25 7 0
1 40 1 0
30 36 6 0
42 14 6 0
40 10 5 0
2 23 1 0
6 45 5 0
45 10 5 0
4 16 2 0
14 0 1 0
6 48 3 0
10 25 3 0
22 1 1 0
48 52 9 0
16 3 1 0
20 18 6 0
21 18 6 0
29 2 3 0
35 36 6 0
37 35 6 0
52 48 8 0
1 60 1 0
57 6 4 0
3 40 2 0
6 63 5 0
32 30 5 0
6 65 5 0
63 10 5 0
10 48 3 0
10 59 6 0
20 27 6 0
6 70 5 0
39 33 8 0
6 72 5 0
25 3 1 0
28 10 5 0
29 1 1 0
41 1 1 0
43 42 7 0
54 1 2 0
55 2 3 0
62 57 11 0
70 6 4 0
77 35 6 0
24 28 7 0
28 5 5 0
1 80 1 0
32 40 5 0
51 10 5 0
55 1 1 0
60 61 10 0
1 90 1 0
61 1 1 0
84 10 5 0
89 85 12 0
6 94 5 0
3 70 2 0
6 96 5 0
47 46 10 0
6 98 5 0
82 71 13 0
96 6 4 0
6 51 3 0
10 87 6 0
24 25 3 0
6 104 5 0
44 30 5 0
6 106 5 0
24 50 7 0
32 60 5 0
39 80 8 0
52 51 8 0
60 91 10 0
101 97 14 0
104 10 5 0
6 68 3 0
6 110 5 0
10 51 3 0
10 109 6 0
24 97 7 0
37 83 8 0
39 89 8 0
47 73 10 0
52 68 8 0
93 84 12 0
95 6 4 0
6 121 5 0
6 122 5 0
6 124 5 0
10 68 3 0
10 75 3 0
24 95 7 0
24 99 7 0
44 40 5 0
44 111 9 0
111 41 9 0
135 10 5 0
114 105 14 0
121 74 12 0
3 135 2 0
10 139 6 0
139 10 5 0
6 134 5 0
6 137 5 0
10 133 6 0
24 102 7 0
24 117 8 0
1 146 1 0
67 103 11 0
123 41 9 0
6 148 5 0
136 135 14 0
10 117 4 0
37 123 6 0
47 116 10 0
93 132 12 0
141 140 16 0
24 127 7 0
44 145 9 0
52 129 8 0
114 128 14 0
146 147 16 0
147 1 1 0
24 143 7 0
32 146 5 0
67 144 11 0
52 152 8 0
\end{verbatim}

\subsection{652}
\begin{verbatim}
3 4 1 0
5 4 2 0
4 3 2 0
3 7 2 0
5 0 1 0
4 9 2 0
4 5 2 0
8 4 2 0
8 7 3 0
8 0 1 0
9 6 2 0
10 7 3 0
2 6 1 0
10 4 2 0
1 4 1 0
6 17 2 0
4 17 1 0
2 20 1 0
9 20 2 0
17 0 1 0
13 4 2 0
13 0 1 0
12 0 1 0
21 27 4 0
13 7 3 0
11 18 3 0
18 3 2 0
6 22 2 0
27 6 2 0
2 15 1 0
30 0 1 0
35 21 3 0
4 22 1 0
37 2 2 0
7 21 1 0
6 28 2 0
4 30 1 0
41 14 3 0
9 15 2 0
9 32 2 0
19 4 2 0
16 4 1 0
19 0 1 0
25 0 1 0
37 48 5 0
29 0 1 0
27 20 2 0
29 4 2 0
39 7 2 0
35 37 3 0
34 0 1 0
37 2 1 0
47 6 2 0
48 6 2 0
56 0 1 0
59 21 3 0
57 0 1 0
1 21 1 0
4 38 1 0
20 7 1 0
63 6 3 0
3 57 1 0
61 21 3 0
66 9 3 0
0 67 1 0
7 41 1 0
8 37 2 0
9 33 2 0
7 37 1 0
9 23 2 0
3 70 1 0
9 61 1 0
11 61 1 0
20 28 2 0
14 61 1 0
21 22 2 0
27 15 2 0
24 61 1 0
27 32 2 0
35 36 3 0
59 37 3 0
48 20 2 0
47 18 3 0
61 37 3 0
53 4 2 0
53 7 3 0
90 4 1 0
14 91 3 0
26 91 3 0
91 61 4 0
91 4 1 0
6 49 2 0
47 91 3 0
39 91 3 0
63 10 3 0
89 0 1 0
52 0 1 0
65 13 4 0
28 92 5 0
2 92 1 0
31 99 5 0
63 11 3 0
14 102 3 0
70 14 3 0
9 92 2 0
100 21 3 0
50 91 3 0
101 6 2 0
46 7 1 0
82 6 2 0
0 88 1 0
64 65 5 0
114 0 1 0
26 102 3 0
105 65 5 0
4 95 1 0
0 119 1 0
9 51 2 0
14 70 1 0
6 95 2 0
7 70 1 0
11 57 1 0
10 65 2 0
9 58 2 0
9 117 1 0
35 60 3 0
27 23 2 0
20 49 2 0
32 97 5 0
55 133 5 0
4 133 1 0
4 135 1 0
3 135 2 0
35 90 3 0
27 136 4 0
9 71 2 0
0 140 1 0
141 0 1 0
24 142 3 0
2 142 1 0
48 136 4 0
2 135 1 0
135 4 1 0
35 80 3 0
135 76 0 1
2 139 1 0
133 4 1 0
0 151 1 0
105 106 5 0
0 153 1 0
94 154 7 0
2 152 1 0
2 154 1 0
68 102 6 0
3 158 2 0
27 33 2 0
26 103 3 0
59 36 3 0
133 100 5 0
48 32 2 0
101 20 2 0
35 54 3 0
48 15 2 0
61 36 3 0
55 140 4 0
73 91 3 0
66 17 2 0
171 27 4 0
59 80 3 0
77 4 2 0
68 70 6 0
100 37 3 0
98 4 1 0
64 106 5 0
59 90 3 0
14 177 3 0
49 93 6 0
9 93 2 0
61 80 3 0
82 136 4 0
100 90 3 0
127 105 6 0
28 180 5 0
146 112 8 0
28 109 5 0
177 4 1 0
135 134 5 0
188 0 1 0
174 0 1 0
101 136 4 0
35 98 3 0
126 120 6 0
193 21 3 0
133 134 5 0
2 109 1 0
152 59 4 0
7 177 1 0
7 151 2 0
18 158 5 0
202 15 5 0
32 202 5 0
9 72 2 0
177 193 6 0
9 81 2 0
172 177 7 0
9 86 2 0
2 158 1 0
27 58 2 0
26 177 3 0
27 51 2 0
32 184 5 0
27 139 2 0
14 78 2 0
27 44 2 0
24 169 1 0
3 209 2 0
9 180 2 0
20 196 4 0
48 23 2 0
35 110 3 0
61 54 3 0
26 181 3 0
57 37 2 0
47 103 3 0
17 204 3 0
59 60 3 0
48 33 2 0
61 60 3 0
59 54 3 0
61 98 3 0
3 234 2 0
193 98 6 0
0 236 1 0
9 83 2 0
55 237 5 0
0 238 1 0
240 0 1 0
240 100 6 0
97 2 1 0
134 2 1 0
66 22 2 0
101 32 2 0
50 103 3 0
39 177 3 0
100 80 3 0
35 85 3 0
132 0 1 0
28 182 5 0
2 234 1 0
157 4 2 0
49 161 6 0
156 167 9 0
154 59 4 0
254 0 1 0
170 258 7 0
35 170 3 0
205 4 1 0
193 37 3 0
59 98 3 0
256 4 2 0
264 48 5 0
196 43 7 0
94 213 7 0
227 201 7 0
158 35 3 0
105 116 5 0
258 21 3 0
241 134 5 0
201 172 7 0
0 261 1 0
28 274 5 0
9 160 2 0
200 154 7 0
7 235 2 0
32 278 5 0
18 209 5 0
278 33 6 0
9 248 3 0
205 59 4 0
14 255 3 0
234 59 4 0
257 154 7 0
27 81 2 0
27 72 2 0
3 277 2 0
35 162 3 0
48 51 2 0
9 131 2 0
27 86 2 0
59 110 3 0
27 83 2 0
26 78 2 0
61 85 3 0
28 221 5 0
59 85 3 0
163 133 5 0
17 281 3 0
35 130 3 0
61 170 3 0
3 303 2 0
9 164 2 0
0 305 1 0
306 0 1 0
48 58 2 0
306 100 6 0
258 170 7 0
0 310 1 0
55 311 5 0
24 312 3 0
61 148 3 0
2 209 1 0
49 226 6 0
50 181 3 0
35 176 3 0
55 238 4 0
319 48 5 0
2 303 1 0
26 187 3 0
59 170 3 0
149 264 6 0
253 231 10 0
193 36 3 0
248 208 10 0
61 110 3 0
268 81 8 0
0 329 1 0
258 37 3 0
9 320 3 0
237 59 4 0
196 158 7 0
28 277 5 0
172 187 7 0
7 261 2 0
78 337 8 0
202 152 5 0
325 0 1 0
337 81 8 0
274 35 3 0
27 128 2 0
7 304 2 0
3 336 2 0
9 214 2 0
283 205 8 0
222 35 3 0
344 58 7 0
9 212 2 0
14 316 3 0
15 154 1 0
27 131 2 0
27 160 2 0
20 180 1 0
24 319 1 0
35 230 3 0
279 59 4 0
27 161 2 0
27 122 2 0
307 134 5 0
48 86 2 0
28 284 5 0
48 72 2 0
24 329 2 0
61 130 3 0
17 349 3 0
48 112 2 0
50 189 3 0
55 306 3 0
59 130 3 0
370 48 5 0
61 173 3 0
9 321 3 0
94 322 7 0
35 263 3 0
117 319 4 0
61 176 3 0
204 155 5 0
193 195 6 0
0 380 1 0
35 233 3 0
193 54 3 0
228 2 1 0
239 2 1 0
193 60 3 0
242 241 6 0
193 98 3 0
258 36 3 0
370 2 1 0
315 287 11 0
200 213 7 0
193 327 6 0
296 282 11 0
257 213 7 0
391 4 2 0
259 264 6 0
396 27 4 0
9 398 3 0
263 59 4 0
0 388 1 0
349 304 9 0
3 392 2 0
59 176 3 0
339 167 9 0
337 275 9 0
311 312 8 0
390 0 1 0
340 241 6 0
379 59 4 0
381 59 4 0
333 133 5 0
9 291 2 0
338 35 3 0
193 338 6 0
9 288 2 0
18 381 3 0
18 336 5 0
15 237 1 0
35 290 3 0
9 293 2 0
0 421 1 0
27 223 2 0
422 0 1 0
417 72 8 0
26 316 3 0
27 296 3 0
163 237 5 0
340 319 7 0
48 182 2 0
27 214 2 0
48 160 2 0
16 424 3 0
433 86 8 0
28 351 5 0
47 255 3 0
59 230 3 0
61 224 3 0
24 370 1 0
309 307 7 0
50 255 3 0
28 392 5 0
61 250 3 0
117 370 4 0
61 230 3 0
49 359 6 0
172 298 7 0
149 396 6 0
59 327 3 0
6 424 1 0
234 37 1 0
35 323 3 0
227 177 3 0
237 37 1 0
258 54 3 0
204 239 5 0
35 299 3 0
193 260 6 0
0 458 1 0
193 85 3 0
26 298 3 0
258 60 3 0
281 155 5 0
258 263 7 0
0 464 1 0
59 224 3 0
59 263 3 0
401 59 4 0
313 2 1 0
312 2 1 0
408 306 7 0
9 353 2 0
193 394 6 0
9 354 2 0
259 396 6 0
27 291 2 0
283 286 8 0
410 381 10 0
15 213 1 0
16 465 3 0
27 292 2 0
394 35 3 0
480 160 9 0
18 459 3 0
484 128 9 0
24 393 2 0
23 237 1 0
14 355 2 0
411 381 10 0
9 362 2 0
27 288 2 0
48 214 2 0
163 311 5 0
424 312 8 0
26 446 4 0
48 296 3 0
59 323 3 0
49 426 6 0
50 335 3 0
61 318 3 0
28 478 5 0
100 249 3 0
61 299 3 0
61 294 3 0
98 283 4 0
149 397 6 0
48 226 2 0
180 473 8 0
200 322 7 0
193 130 3 0
193 162 3 0
285 465 8 0
281 239 5 0
259 311 5 0
59 294 3 0
258 205 3 0
193 347 6 0
0 517 1 0
35 371 3 0
258 323 7 0
0 520 1 0
258 85 3 0
449 4 2 0
523 27 4 0
400 264 6 0
453 287 11 0
259 397 6 0
483 239 8 0
505 523 11 0
59 299 3 0
463 239 7 0
465 239 7 0
333 237 5 0
417 379 8 0
468 401 11 0
410 523 11 0
333 465 8 0
15 381 1 0
16 521 3 0
539 212 10 0
18 518 3 0
27 354 2 0
541 288 11 0
27 353 2 0
3 512 2 0
242 409 7 0
47 446 4 0
24 429 1 0
49 495 6 0
28 512 5 0
48 284 2 0
61 357 3 0
59 467 4 0
50 446 4 0
528 59 4 0
193 230 3 0
193 233 3 0
258 130 3 0
285 475 9 0
285 396 6 0
193 327 3 0
432 531 13 0
16 562 3 0
32 563 5 0
563 214 10 0
528 4 1 0
562 4 1 0
0 567 1 0
193 467 7 0
400 465 8 0
0 569 1 0
55 571 5 0
35 437 3 0
117 429 4 0
2 568 1 0
410 518 10 0
534 364 12 0
400 475 9 0
358 428 9 0
411 518 10 0
513 239 7 0
540 239 8 0
562 59 4 0
425 155 5 0
59 357 3 0
400 237 5 0
193 376 6 0
0 587 1 0
3 566 1 0
400 396 6 0
15 322 1 0
564 59 4 0
50 442 3 0
589 354 12 0
16 586 4 0
595 276 11 0
521 239 7 0
9 575 3 0
61 466 3 0
61 457 3 0
258 230 3 0
285 586 9 0
333 588 8 0
421 90 2 0
393 375 9 0
410 529 11 0
417 516 8 0
320 422 6 0
476 581 14 0
468 489 11 0
507 2 1 0
15 518 1 0
575 542 15 0
613 4 2 0
555 614 13 0
15 562 1 0
571 572 12 0
59 519 3 0
594 567 13 0
48 582 2 0
55 620 4 0
278 463 6 0
242 521 6 0
613 0 1 0
622 432 13 0
592 564 13 0
14 566 1 0
32 392 1 0
50 501 3 0
48 426 2 0
117 527 4 0
163 570 6 0
242 527 7 0
285 590 9 0
258 401 3 0
542 598 15 0
547 2 1 0
55 637 4 0
563 528 10 0
555 562 12 0
20 627 4 0
278 584 6 0
358 533 9 0
3 633 2 0
565 155 5 0
7 635 1 0
4 644 1 0
15 588 1 0
32 478 1 0
32 426 1 0
50 549 3 0
149 578 6 0
163 578 6 0
596 267 7 0
32 495 1 0
7 654 1 0
\end{verbatim}

\subsection{45}
\begin{verbatim}
2 2 3 0
2 3 4 0
0 3 1 0
4 0 1 0
4 3 5 0
5 2 4 0
1 8 1 0
4 8 4 0
6 1 1 0
0 10 1 0
0 11 1 0
1 13 1 0
5 0 1 0
6 10 4 0
9 0 1 0
13 17 6 0
2 4 1 0
15 1 1 0
1 20 1 0
17 1 1 0
0 22 1 0
6 11 4 0
7 0 1 0
25 19 8 0
8 6 2 0
8 27 4 0
9 8 4 0
14 0 1 0
15 10 4 0
15 16 7 0
16 6 5 0
28 8 6 0
0 24 1 0
1 32 1 0
11 27 4 0
17 13 3 0
35 37 9 0
21 27 4 0
21 28 7 0
39 35 10 0
40 21 8 0
16 15 3 0
27 34 6 0
28 20 6 0
44 46 12 0
\end{verbatim}

\subsection{50}
\begin{verbatim}
2 2 8 0
1 3 1 0
0 4 1 0
5 0 1 0
6 1 1 0
1 7 1 0
0 8 1 0
5 4 9 0
9 0 1 0
11 1 1 0
1 12 1 0
0 13 1 0
10 0 1 0
15 1 1 0
1 16 1 0
0 17 1 0
10 4 9 0
14 0 1 0
20 1 1 0
1 21 1 0
12 11 9 0
18 0 1 0
24 1 1 0
1 25 1 0
7 6 6 0
9 8 7 0
19 0 1 0
29 1 1 0
1 30 1 0
27 31 13 0
9 17 7 0
12 23 9 0
16 15 8 0
18 8 7 0
28 0 1 0
37 1 1 0
1 38 1 0
7 24 6 0
23 26 9 0
25 6 6 0
18 17 7 0
36 17 12 0
38 37 14 0
7 27 6 0
28 8 7 0
25 24 6 0
25 40 11 0
44 0 1 0
50 1 1 0
1 51 1 0
\end{verbatim}

\subsection{77}
\begin{verbatim}
3 4 0 1
1 3 1 0
1 4 1 0
0 7 1 0
8 2 1 0
1 9 1 0
2 10 1 0
5 3 3 0
0 12 1 0
6 1 1 0
14 0 1 0
14 12 4 0
2 15 1 0
17 2 1 0
0 18 1 0
1 19 1 0
2 16 1 0
8 15 4 0
8 16 4 0
11 1 1 0
19 24 4 0
24 0 1 0
13 1 1 0
27 0 1 0
1 28 1 0
29 2 1 0
2 30 1 0
0 26 1 0
10 29 3 0
1 22 1 0
34 2 1 0
1 23 1 0
2 35 1 0
10 27 2 0
10 38 3 0
11 10 3 0
12 8 2 0
13 9 4 0
15 17 3 0
17 18 3 0
21 1 1 0
45 0 1 0
0 46 1 0
27 28 3 0
27 42 3 0
37 29 3 0
0 44 1 0
1 48 1 0
9 8 2 0
10 52 3 0
12 13 2 0
12 47 2 0
15 14 2 0
57 55 6 0
17 46 3 0
18 38 3 0
51 60 6 0
22 34 4 0
37 38 3 0
37 39 5 0
39 10 4 0
45 42 3 0
45 49 5 0
49 27 4 0
53 58 6 0
0 67 1 0
27 48 3 0
34 46 3 0
34 59 5 0
34 66 5 0
49 47 4 0
2 73 1 0
17 67 3 0
39 35 4 0
38 78 4 0
49 45 3 0
78 80 8 0
\end{verbatim}

\end{multicols}
}}

\end{document}